\DeclareMathOperator{\divv}{div}
\DeclareMathOperator{\curl}{curl}
\begin{document}
\title{Global strong solution for 3D compressible heat-conducting
magnetohydrodynamic equations revisited
\thanks{
This research was partially supported by National Natural Science Foundation of China (Nos. 11901288, 11901474, 12071359), Scientific Research Foundation of Jilin Provincial Education Department (No. JJKH20210873KJ), Postdoctoral Science Foundation of China (No. 2021M691219), and Exceptional Young Talents Project of Chongqing Talent (No. cstc2021ycjh-bgzxm0153).}
}

\author{Yang Liu$\,^{\rm 1, 2}\,$\quad Xin Zhong$\,^{\rm 3}\,${\thanks{Corresponding author. E-mail address:
liuyang0405@ccsfu.edu.cn (Y. Liu), xzhong1014@amss.ac.cn (X. Zhong).}}
\date{}\\
\footnotesize $^{\rm 1}\,$
School of Mathematics, Jilin University, Changchun 130012, P. R. China\\
\footnotesize $^{\rm 2}\,$ College of Mathematics, Changchun Normal
University, Changchun 130032, P. R. China\\
\footnotesize $^{\rm 3}\,$ School of Mathematics and Statistics, Southwest University, Chongqing 400715, P. R. China} \maketitle
\newtheorem{theorem}{Theorem}[section]
\newtheorem{definition}{Definition}[section]
\newtheorem{lemma}{Lemma}[section]
\newtheorem{proposition}{Proposition}[section]
\newtheorem{corollary}{Corollary}[section]
\newtheorem{remark}{Remark}[section]
\renewcommand{\theequation}{\thesection.\arabic{equation}}
\catcode`@=11 \@addtoreset{equation}{section} \catcode`@=12
\maketitle{}

\begin{abstract}
We revisit the 3D Cauchy problem of compressible heat-conducting magnetohydrodynamic equations with vacuum as far field density. By delicate energy method, we derive global existence and uniqueness of strong solutions provided that $(\|\rho_0\|_{L^\infty}+1)
\big[\|\rho_0\|_{L^3}+(\|\rho_0\|_{L^\infty}+1)^2\big(\|\sqrt{\rho_0}u_0\|_{L^2}^2
+\|b_0\|_{L^2}^2\big)\big]\big[\|\nabla u_0\|_{L^2}^2+(\|\rho_0\|_{L^\infty}+1)\big(\|\sqrt{\rho_0}E_0\|_{L^2}^2+\|\nabla b_0\|_{L^2}^2\big)\big]$ is properly small. In particular, the smallness condition is independent of any norms of the initial data. This work improves our previous results \cite{LZ20,LZ21}.
\end{abstract}

\textit{Key words and phrases}. Compressible heat-conducting magnetohydrodynamic equations; global strong solution; Cauchy problem; vacuum.

2020 \textit{Mathematics Subject Classification}.  35Q35; 76N10; 76W05.


\section{Introduction}
Let $\Omega\subseteq\mathbb{R}^3$ be a domain, the motion of a viscous, compressible, and heat conducting magnetohydrodynamic (MHD) flow in $\Omega$ can be described by full compressible MHD equations (see \cite[Chapter 3]{LQ2012}):
\begin{align}\label{a1}
\begin{cases}
\rho_t+\divv(\rho u)=0,\\
\rho u_t+\rho u\cdot\nabla u-\mu\Delta u-(\lambda+\mu)\nabla\divv u+\nabla p=\curl b\times b,\\
c_v\rho(\theta_t+u\cdot\nabla\theta)+p\divv u-\kappa\Delta\theta=\mathcal{Q}(\nabla u)
+\nu|\curl b|^2,\\
b_t-b\cdot\nabla u+u\cdot\nabla b+b\divv u=\nu\Delta b,\\
\divv b=0,
\end{cases}
\end{align}
where the unknowns $\rho\ge 0$, $u\in\mathbb{R}^3$, $\theta\ge 0$, and $b\in\mathbb{R}^3$ are
the density, velocity, pressure, absolute temperature, and magnetic field,
respectively; $p=R\rho\theta$, with positive constant $R$, is the pressure, and
\begin{align}
\mathcal{Q}(\nabla u)=\frac{\mu}{2}|\nabla u+(\nabla u)^\top|^2
+\lambda(\divv u)^2,
\end{align}
with $(\nabla u)^\top$ being the transpose of $\nabla u$. The constant viscosity coefficients $\mu$ and $\lambda$ satisfy the physical restrictions
\begin{align}\label{1.2}
\mu>0, \quad 2\mu+3\lambda\ge 0.
\end{align}
Positive constants $c_\nu$, $\kappa$, and $\nu$ are the heat capacity, the ratio of the heat conductivity coefficient over the heat capacity, and the magnetic diffusive coefficient, respectively.

In this paper, we will consider the Cauchy problem for \eqref{a1} in $\mathbb{R}^3\times(0,T)$ with the initial condition
\begin{align}\label{a2}
(\rho, \rho u,\rho\theta, b)(x, 0)=(\rho_0, \rho_0u_0, \rho_0\theta_0, b_0)(x), \quad x\in\mathbb{R}^3,
\end{align}
and the far field behavior
\begin{align}\label{a3}
(\rho, u, \theta, b)(x, t)\rightarrow (0, 0, 0, 0)~{\rm as}~|x|\rightarrow \infty.
\end{align}

It should be noted that the system \eqref{a1} becomes the compressible non-isentropic Navier-Stokes equations when there is no electromagnetic field,  which is one of the most important systems in fluid dynamics.
With the assumption that the initial data differ only slightly from the equilibrium values (constants), Matsumura and Nishida \cite{MN80,MN83} first proved the global existence of smooth solutions to initial boundary value problems and the Cauchy problem.
Feireisl \cite{F2004} obtained the global existence of
the so-called ``variational solutions" in the sense that the energy equation is replaced by an energy inequality. Recently, Huang and Li \cite{HL18} derived global well-posedness of strong solutions to the full compressible
Navier-Stokes equations in $\mathbb{R}^3$ with non-vacuum at infinity which are of small energy but possibly large oscillations.
Wen and Zhu \cite{WZ17} showed global existence of strong solutions with far-field vacuum under the condition that the initial mass is properly small in certain sense. Meanwhile, Li \cite{L20} obtained a new type
of global strong solutions under some smallness condition on the scaling invariant quantity. Very recently, Liang \cite{L21} established global strong solutions when the initial energy is small. Moreover, decay rates were also determined.

Let's turn our attention to the full compressible MHD equations \eqref{a1}.
As a couple system, \eqref{a1} contains much richer structures than the full compressible Navier-Stokes equations. It is not merely a combination of fluid equations and magnetic field equations but an interactive system. Their distinctive features make analytical studies a great challenge but offer new opportunities.
For the initial data satisfying some compatibility condition, Fan and Yu \cite{FY09} established the local existence and uniqueness of strong solutions to the problem \eqref{a1}--\eqref{a3}. Later, Liu and Zhong \cite{LZ20} extended this local existence result to be a global one provided that $\|\rho_0\|_{L^\infty}+\|b_0\|_{L^3}$ is suitably small and the viscosity coefficients satisfy $3\mu>\lambda$. This result was improved in \cite{LZ21} where the authors proved the global existence and uniqueness of strong solutions, which may be of possibly large oscillations, provided that the initial data are of small total energy. At the same time, Hou-Jiang-Peng \cite{HJP22} obtained global strong solutions under the condition that $\|\rho_0\|_{L^1}+\|b_0\|_{L^2}$ is suitably small.
For the global existence of weak solutions, we refer to \cite{HW08,DF2006,LG2014,LS2021}
and references therein. There are also some interesting mathematical results concerning the global existence of (weak, strong or classical) solutions to the compressible isentropic MHD equations, please refer to \cite{FL2020,SH12,HHPZ,HW10,LXZ13,WW17,LSX16}.

The aim of the present paper is to extend the result in \cite{L20} to the compressible heat-conductive magnetohydrodynamic flows.
This is a nontrivial generalization, since one has to control the strong nonlinear terms involved with the magnetic field. Furthermore, the restriction on the viscosity coefficients is relaxed. These are exactly the new points of this paper.

For $1\le p\le \infty$ and integer $k\ge 0$, we denote the standard homogeneous and inhomogeneous
Sobolev spaces as follows:
\begin{align*}
\begin{cases}
L^p=L^p(\mathbb{R}^3),~ ~ W^{k, p}=L^p\cap D^{k, p},~~ H^k=W^{k, 2},\\
D^{k, p}=\{u\in L_{loc}^1(\mathbb{R}^3): \|\nabla^ku\|_{L^p}<\infty\}, D^k=D^{k, 2},\\
D_0^1=\{u\in L^6(\mathbb{R}^3): \|\nabla u\|_{L^2}<\infty\}.
\end{cases}
\end{align*}

We can now state our main result.
\begin{theorem}\label{thm1}
Assume that $3\mu>\lambda$ and let $q\in (3, 6]$ be a fixed constant. Let the initial data $(\rho_0\ge 0, u_0, \theta_0\ge 0, b_0)$ satisfy
\begin{gather}
\rho_0\in H^1\cap W^{1, q},~ (\sqrt{\rho_0}u_0,\sqrt{\rho_0}\theta_0)\in L^2, ~(u_0, \theta_0)\in D_0^1\cap D^2,~ b_0\in H^2,
\end{gather}
and the compatibility condition
\begin{align}
\begin{cases}
-\mu\Delta u_0-(\lambda+\mu)\nabla{\rm div} u_0+\nabla(R\rho_0\theta_0)
-\curl b_0\times b_0=\sqrt{\rho_0}g_1,\\
-\kappa\Delta\theta_0-\mathcal{Q}(\nabla u_0)-\eta|\curl b_0|^2=\sqrt{\rho_0}g_2,\label{qqw}
\end{cases}
\end{align}
with $g_1,g_2\in L^2(\mathbb{R}^3)$. There exists a small positive
constant $\varepsilon_0$ depending only on $R$,  $\mu$, $\lambda$, $\nu$, $\kappa$, and $c_v$
 such that if
\begin{align}\label{1.8}
N_0\triangleq \bar{\rho}\big[\|\rho_0\|_{L^3}+\bar{\rho}^2(\|\sqrt{\rho_0}u_0\|_{L^2}^2
+\|b_0\|_{L^2}^2)\big]
\big[\|\nabla u_0\|_{L^2}^2+\bar{\rho}(\|\sqrt{\rho_0}E_0\|_{L^2}^2+\|\nabla b_0\|_{L^2}^2)\big]\le \varepsilon_0,
\end{align}
where $E_0=\frac{|u_0|^2}{2}+c_v\theta_0$ and $\bar{\rho}=\|\rho_0\|_{L^\infty}+1$,
then the problem \eqref{a1}--\eqref{a3} has a unique global strong solution $(\rho, u, \theta, b)$ satisfying
\begin{align*}
\begin{cases}
\rho\in C([0, T]; H^1\cap W^{1, q}),\ \rho_t\in C([0, T]; L^2\cap L^q),\\
(u, \theta)\in C([0, T]; D^1\cap D^2)\cap L^2(0, T; D^{2, q}),\ (u_t, \theta_t)\in L^2(0, T; D_0^1), \\
b\in C([0, T]; H^2)\cap L^2(0, T; H^3),\ b_t\in C([0, T];L^2)\cap L^2(0, T; H^1),\\
(\sqrt{\rho}u_t, \sqrt{\rho}\theta_t)\in L^\infty(0, T; L^2).
\end{cases}
\end{align*}
\end{theorem}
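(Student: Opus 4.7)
The plan is to start from the local strong solution furnished by Fan and Yu \cite{FY09} and continue it globally by the standard blow--up criterion (of the form $\limsup_{t\to T^{*}}\|\rho\|_{L^{\infty}}=\infty$, known for the present system). The heart of the proof is therefore to derive \emph{time--uniform a priori estimates} under \eqref{1.8} and to run a bootstrap/continuity argument in time. Following the philosophy introduced by Li \cite{L20} for the Navier--Stokes case, I would split the bounds into a low--order cluster
\[
L(T):=\sup_{[0,T]}\!\Big(\|\rho\|_{L^{3}}+\bar\rho^{2}\bigl(\|\sqrt{\rho}\,u\|_{L^{2}}^{2}+\|b\|_{L^{2}}^{2}\bigr)\Big),
\]
and a high--order cluster
\[
H(T):=\sup_{[0,T]}\!\Big(\|\nabla u\|_{L^{2}}^{2}+\bar\rho\bigl(\|\sqrt{\rho}\,E\|_{L^{2}}^{2}+\|\nabla b\|_{L^{2}}^{2}\bigr)\Big)+\int_{0}^{T}\!\Big(\|\sqrt{\rho}\,\dot u\|_{L^{2}}^{2}+\|b_{t}\|_{L^{2}}^{2}+\|\sqrt{\rho}\,\dot\theta\|_{L^{2}}^{2}\Big)\,dt,
\]
so that \eqref{1.8} reads $\bar\rho\,L(0)H(0)\le\varepsilon_{0}$; then one bootstraps the induction hypothesis $\bar\rho\,L(T)H(T)\le 2\varepsilon_{0}$ together with $\|\rho(t)\|_{L^{\infty}}\le 2\bar\rho$, improving each factor by a constant, which allows the a priori bound to be propagated on any time interval on which the local solution exists.

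\textbf{Key steps.} First, mass conservation together with the total energy identity $\tfrac{d}{dt}\!\int(\rho E+\tfrac12|b|^{2})\,dx=0$ yields automatic control of $\|\sqrt{\rho}\,u\|_{L^{2}}$, $\|\sqrt{\rho}\,\theta\|_{L^{1}}$, and $\|b\|_{L^{2}}$. Second, testing the continuity equation with $\rho^{2}$ and introducing the effective viscous flux $F=(2\mu+\lambda)\divv u-R\rho\theta-\tfrac12|b|^{2}$ -- which satisfies the elliptic equation $\Delta F=\divv(\rho\dot u-b\cdot\nabla b)$ -- yields propagation of $\|\rho\|_{L^{3}}$ and, via a Lagrangian integration along particle paths in the spirit of Hoff and subsequent refinements, the pointwise bound $\|\rho(t)\|_{L^{\infty}}\le 2\bar\rho$ provided $\bar\rho L(T)H(T)$ is small. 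Third, I would multiply the momentum equation by $u_{t}$, test the magnetic equation by $b_{t}$ (and also by $\Delta b$), differentiate the momentum equation in time and test with $\dot u$, and finally test the temperature equation with $\dot\theta$ so as to extract dissipation for all ingredients of $H(T)$. The nonlinear couplings $\curl b\times b$, $b\cdot\nabla u$, $p\,\divv u$, $\mathcal{Q}(\nabla u)$ and $\nu|\curl b|^{2}$ are bounded by Gagliardo--Nirenberg interpolation together with the $L^{\infty}$ density bound, and every term is organised in the form $C\bar\rho\,L(T)^{a}H(T)^{b}$ with $a+b>1$, so that for $\varepsilon_{0}$ sufficiently small (depending only on $R,\mu,\lambda,\nu,\kappa,c_{v}$) the induction closes.

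\textbf{Main obstacle.} The delicate point is to preserve the scaling--invariant structure of $N_{0}$ while accommodating the magnetic and thermal couplings. The magnetic pressure $\tfrac12|b|^{2}$ enters $F$ on the same footing as $R\rho\theta$, so it must be controlled through $\|b\|_{L^{\infty}}$ obtained from the parabolic $b$--equation and the $H^{2}$--dissipation without any logarithmic loss; this is where the hypothesis $3\mu>\lambda$ is used to estimate $\|\nabla u\|_{L^{6}}$ cleanly in terms of $\|\sqrt{\rho}\,\dot u\|_{L^{2}}$, $\|\nabla b\|_{L^{2}}$ and $\|b\|_{L^{\infty}}$. Equally delicate is the temperature equation, whose source terms $\mathcal{Q}(\nabla u)$ and $\nu|\curl b|^{2}$ are only in $L^{1}_{t,x}$ a priori; one must exploit the structural identity $c_{v}\rho\dot\theta=\kappa\Delta\theta-p\divv u+\mathcal{Q}(\nabla u)+\nu|\curl b|^{2}$ to bootstrap $\theta$ into $L^{\infty}_{t}(D^{1}\cap D^{2})$, paying careful attention to absorb all terms into $H(T)$ multiplied by a factor of $N_{0}$. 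Once the a priori estimates close, the remaining regularity declared in the theorem follows from standard elliptic and parabolic Schauder--type estimates applied to $u$, $\theta$ and $b$, together with the transport equation for $\rho$, while uniqueness is routine in the stated regularity class.
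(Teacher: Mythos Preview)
Your overall architecture---local existence from \cite{FY09}, a bootstrap on a scaling-invariant quantity together with the density $L^\infty$ bound, and a continuity argument---is exactly what the paper does. However, two of your technical choices diverge from the paper in ways that matter, and one of them looks like a genuine gap.

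\textbf{Where $3\mu>\lambda$ actually enters.} You locate the hypothesis $3\mu>\lambda$ in the estimate of $\|\nabla u\|_{L^6}$ via the effective viscous flux. This is not correct: the decomposition $\|\nabla u\|_{L^6}\le C(\|\nabla F\|_{L^2}+\|\nabla w\|_{L^2}+\bar\rho\|\nabla\theta\|_{L^2}+\||b||\nabla b|\|_{L^2})$ holds for all $\mu,\lambda$ satisfying \eqref{1.2}. In the paper the condition $3\mu>\lambda$ is used in Lemma~\ref{l31}'s companion estimate (inequality \eqref{2.9}, taken from \cite[Lemma~3.4]{LZ20}): one multiplies the momentum equation by $4|u|^2u$ to obtain dissipation of $\||u|\,|\nabla u|\|_{L^2}^2$, and the coercivity of the resulting viscous term requires precisely $3\mu>\lambda$. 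This $\||u|\,|\nabla u|\|_{L^2}$ quantity is then the workhorse replacing your proposed $\|\sqrt{\rho}\,\dot u\|_{L^2}$ in several places.

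\textbf{Total energy versus $\dot\theta$.} The paper never tests the temperature equation with $\dot\theta$. Instead it forms the \emph{total energy} equation for $E=\tfrac12|u|^2+c_v\theta$ (see \eqref{b5}), in which the troublesome quadratic sources $\mathcal{Q}(\nabla u)$ and $\nu|\curl b|^2$ become the divergence $\divv(\mathcal{S}\cdot u)$ plus magnetic terms, and then tests with $E$ itself. Your route---testing $\eqref{a1}_3$ with $\dot\theta$---would force you to control $\int \mathcal{Q}(\nabla u)\,\dot\theta\,dx$ and $\int|\curl b|^2\,\dot\theta\,dx$ directly; at the regularity level available before the bootstrap closes these are only in $L^1_{t,x}$ paired against $\dot\theta$, and it is not clear you can absorb them while preserving the \emph{exact} scaling-invariant form of $N_0$ in \eqref{1.8}. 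The paper's pairing of the $E$-equation with the $|u|^2u$-trick is precisely what allows every nonlinear term to be organised as products of $\|\rho\|_{L^3}$, $\|\sqrt{\rho}E\|_{L^2}$, $\|\nabla u\|_{L^2}$, $\|b\|_{L^2}$, $\|\nabla b\|_{L^2}$ so that the closure reproduces $N_0$ and not some larger quantity.

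\textbf{Minor points.} The paper does not invoke a blow-up criterion; it runs the continuity argument directly on the maximal interval (Proposition~\ref{pro}). Also, it does not differentiate the momentum equation in time; the $\|\nabla u\|_{L^2}$ estimate comes from multiplying by $u_t$ and rewriting everything through $F$ and $w=\curl u$ (Lemma~\ref{lem}), with the magnetic coupling handled by pulling out $\frac{d}{dt}\int b\cdot\nabla u\cdot b\,dx$ rather than by estimating $\dot u$. Your $H(T)$ also packages time-integrated dissipation into the bootstrap quantity, whereas the paper's $N_T$ contains only sup-in-time norms; this is cosmetic but worth aligning if you want the constants to match \eqref{1.8}.
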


\begin{remark}
It should be noted that our smallness assumption is independent of any norms of the initial data, which is in sharp contrast to \cite{LZ20,LZ21,HJP22}
where they established global strong solution under some smallness conditions depending on the initial data.
\end{remark}

\begin{remark}
It is not hard to check that the quantity $N_0$ in \eqref{1.8} is scaling invariable
under the transform
\begin{align*}
\rho_\lambda(x, t)=\rho(\lambda x, \lambda^2t), ~u_\lambda(x, t)=\lambda u(\lambda x, \lambda^2t),
~\theta_\lambda(x, t)=\lambda^2(\lambda x, \lambda^2t),~b_\lambda(x, t)=\lambda b(\lambda x, \lambda^2t).
\end{align*}
Thus we generalize the result in \cite{L20} to the compressible non-isentropic MHD equations. Nevertheless, we should point out that the magnetic field acts some significant roles since $N_0$ differs slightly from that of in \cite{L20} when $b\equiv b_0\equiv0$. Moreover, the restriction $2\mu>\lambda$ in \cite{L20} on the viscosity
coefficients is relaxed to $3\mu>\lambda$.
\end{remark}

It seems that the methods used in \cite{LZ20,LZ21,HJP22} are not available here because our smallness condition depends only on the parameters in the system. We mainly apply some techniques developed by Li \cite{L20} to give the proof of Theorem \ref{thm1}. However, compared with compressible heat-conducting Navier-Stokes equations considered in \cite{L20}, due to the strong coupling and interplay interaction between the fluid motion and the magnetic field, the crucial techniques of proofs in \cite{L20} cannot be adapted directly.
To overcome these difficulties, we first obtain
$L^\infty(0,T;L^4)$ bound of $b$ in terms of $L^\infty_tL^2_x$-norm of $b,\nabla b$, and $\nabla u$ (see \eqref{2.4}).
Then some necessary lower order time-independent estimates are obtained (see \eqref{2.8} and \eqref{2.14}).
Next, with the help of the effect viscous flux, we derive $L^\infty(0,T;L^2)$ estimate of $\nabla u$ under \textit{a priori hypothesis} \eqref{2.23} (see Lemma \ref{lem}). So, the next key step is to complete the proof of the \textit{a priori hypothesis}, that is, to show that $\|\rho(t)\|_{L^\infty}$ is in fact strictly less than $4\bar\rho$. Inspired by \cite[Proposition 2.6]{L20}, we find that $\|\rho\|_{L^\infty(0,T;L^\infty)}$ is indeed bounded by other norms of the solution and could be as small as
desired under smallness condition on $N_0$ (see Proposition \ref{pro}). This in particular completes the proof of \textit{a priori hypothesis}. Having the time-independent estimates at hand, we can show Theorem \ref{thm1} by continuity arguments as those in \cite{LZ20}.

The rest of the paper is organized as follows. In Section 2, we recall some known facts and elementary inequalities which will be used later. Section 3 is devoted to the proof of Theorem \ref{thm1}.

\section{Preliminaries}
In this section, we recall some known facts and elementary inequalities which will be used frequently later.

We begin with the local existence of a unique strong solution with vacuum to the problem \eqref{a1}--\eqref{a3}, whose proof can be obtained by similar ways as those in \cite{FY09}.

\begin{lemma}
Assume that the initial data $(\rho_0\ge 0, u_0, \theta_0\ge 0, b_0)$ satisfies the conditions in Theorem \ref{thm1}. Then, there exists
a positive time $T_0>0$ depending only on $R$, $\mu$, $\lambda$, and $\psi_0$, such that the problem \eqref{a1}--\eqref{a3} admits a unique strong
solution in $\mathbb{R}^3\times (0, T_0]$, where $\psi_0$ is a positive constant such that
\begin{align*}
\|\rho_0\|_{H^1\cap W^{1, q}}+\|(u_0, \theta_0)\|_{D_0^1\cap D^2}+\|b_0\|_{H^2}+\|(\sqrt{\rho_0}u_0, \sqrt{\rho_0}\theta_0, g_1, g_2)\|_{L^2}\le \psi_0.
\end{align*}
\end{lemma}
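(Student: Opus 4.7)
The plan is to follow the standard construction for local strong solutions to compressible fluid systems with vacuum, closely paralleling Fan--Yu \cite{FY09}. Since the density $\rho_0$ is allowed to vanish, the natural strategy is to first regularize the initial density to stay away from vacuum, solve a sequence of linearized problems, establish uniform-in-$\delta$ a priori estimates on a time interval depending only on $\psi_0$, and then pass to the limit.

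First I would replace $\rho_0$ by $\rho_0^\delta = \rho_0 + \delta$ for small $\delta > 0$ so that the density stays strictly positive. On this regularized initial data, one constructs solutions iteratively: given $(u^{k}, \theta^{k}, b^{k})$, solve the transport equation for $\rho^{k+1}$ along the flow of $u^k$ (which preserves the $H^1 \cap W^{1,q}$ regularity), then solve the linear parabolic problems
\begin{align*}
\rho^{k+1} u^{k+1}_t - \mu \Delta u^{k+1} - (\lambda+\mu)\nabla \divv u^{k+1} &= -\rho^{k+1} u^k \cdot \nabla u^k - \nabla(R\rho^{k+1}\theta^k) + \curl b^k \times b^k,\\
c_v \rho^{k+1}\theta^{k+1}_t - \kappa \Delta \theta^{k+1} &= -c_v \rho^{k+1} u^k\cdot \nabla \theta^k - R\rho^{k+1}\theta^k \divv u^k + \mathcal{Q}(\nabla u^k) + \nu|\curl b^k|^2,\\
b^{k+1}_t - \nu \Delta b^{k+1} &= b^k \cdot \nabla u^k - u^k \cdot \nabla b^k - b^k \divv u^k,
\end{align*}
each of which is a standard linear elliptic-parabolic problem with coefficients bounded away from zero. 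Classical linear theory yields $(u^{k+1}, \theta^{k+1}) \in D_0^1 \cap D^2$ and $b^{k+1} \in H^2$.

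The heart of the matter is establishing a priori estimates for the iterates, uniform in $k$ and $\delta$, on a common time interval $[0, T_0]$ with $T_0$ depending only on $\psi_0$ and the physical constants. One multiplies the momentum equation by $u_t^{k+1}$, the temperature equation by $\theta_t^{k+1}$, and the magnetic equation by $\Delta b^{k+1}$, then differentiates in time and tests with $u_t^{k+1}, \theta_t^{k+1}, b_t^{k+1}$ respectively to close bounds on $\|\sqrt{\rho^{k+1}} u_t^{k+1}\|_{L^2}, \|\sqrt{\rho^{k+1}}\theta_t^{k+1}\|_{L^2}, \|b_t^{k+1}\|_{L^2}$ together with $\|\nabla^2 u^{k+1}\|_{L^2}$, $\|\nabla^2 \theta^{k+1}\|_{L^2}$, $\|\nabla^2 b^{k+1}\|_{L^2}$. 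The compatibility conditions \eqref{qqw} are precisely what allow one to bound $\sqrt{\rho_0^\delta}\,u^{k+1}_t|_{t=0}$ and $\sqrt{\rho_0^\delta}\,\theta^{k+1}_t|_{t=0}$ in $L^2$ independently of $\delta$, since they provide $L^2$ representatives $g_1, g_2$ of the residuals at $t=0$. The $W^{1,q}$ bound on $\rho^{k+1}$ is obtained by applying $\nabla$ to the transport equation and using the Beale--Kato--Majda type estimate $\|\nabla u\|_{L^\infty} \lesssim \|\divv u\|_{L^\infty} + \|\curl u\|_{L^\infty}$ together with the $L^p$ theory for the Lam\'e system.

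Once uniform estimates are in hand, standard Banach contraction in a weaker norm (e.g.\ $L^2$-norm of the differences of consecutive iterates) shows that $\{(\rho^k, u^k, \theta^k, b^k)\}$ is Cauchy on a short time interval, producing a unique strong solution to the $\delta$-regularized problem. Finally, letting $\delta \to 0$ and using weak-$*$ compactness plus strong compactness of lower-order norms, one recovers a strong solution of the original problem \eqref{a1}--\eqref{a3} with the claimed regularity. Uniqueness at the end follows from a direct energy estimate on the difference of two solutions, treating $\rho$-weighted $L^2$ norms of $u, \theta$ and the $L^2$ norm of $b$. The main obstacle is the coupling between the hyperbolic transport of $\rho$ and the parabolic equations for $u, \theta, b$ in the degenerate regime where $\rho$ can vanish; this is handled exactly through the $\delta$-regularization combined with the compatibility conditions, following \cite{FY09} almost verbatim, with only minor bookkeeping adjustments to accommodate the additional magnetic nonlinearities $\curl b \times b$ and $\nu|\curl b|^2$.
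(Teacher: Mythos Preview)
Your proposal is correct and follows the same approach as the paper: the paper itself gives no detailed proof of this lemma, simply stating that it ``can be obtained by similar ways as those in \cite{FY09}.'' Your regularization--iteration--uniform estimates--compactness sketch is precisely the Fan--Yu scheme being invoked, with the compatibility conditions \eqref{qqw} used exactly as you describe to control the initial values of $\sqrt{\rho}\,u_t$ and $\sqrt{\rho}\,\theta_t$.
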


Next, the following well-known Gagliardo-Nirenberg inequality (see \cite[Chapter 10, Theorem 1.1]{D2016}) will be
used frequently later.
\begin{lemma}\label{l22}
Assume that $f\in D^{1, m}\cap L^r$ with $m,r\geq1$, then there exists a constant $C$ depending only on $q$, $m$, and $r$ such that
\begin{align*}
\|f\|_{L^q}\le C\|\nabla f\|_{L^m}^\vartheta\|f\|_{L^r}^{1-\vartheta},
\end{align*}
where $\vartheta=\big(\frac{1}{r}-\frac{1}{q}\big)/
\big(\frac{1}{r}-\frac{1}{m}+\frac13\big)$ and the admissible range of $q$ is the following:
\begin{itemize}
\item if $m<3$, then $q$ is between $r$ and $\frac{3m}{3-m}$;
\item if $m=3$, then $q\in [r, \infty)$;
\item if $m>3$, then $q\in [r, \infty]$.
\end{itemize}
\end{lemma}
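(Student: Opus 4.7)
The plan is to reduce the claim to the endpoint Sobolev/Morrey inequalities and then close the argument with Hölder interpolation. A preliminary scaling check with $f_\lambda(x)=f(\lambda x)$ gives $\|f_\lambda\|_{L^q}=\lambda^{-3/q}\|f\|_{L^q}$, $\|\nabla f_\lambda\|_{L^m}=\lambda^{1-3/m}\|\nabla f\|_{L^m}$, and $\|f_\lambda\|_{L^r}=\lambda^{-3/r}\|f\|_{L^r}$; matching the powers of $\lambda$ on the two sides of the target inequality forces precisely $\vartheta=(1/r-1/q)/(1/r-1/m+1/3)$, which both explains the formula and pins down the unique scaling-admissible exponent. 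Throughout, I would first work with $f\in C_c^\infty(\mathbb{R}^3)$ and pass to $D^{1,m}\cap L^r$ by a standard truncation/mollification density argument.

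\textbf{Case $m<3$.} I would invoke the classical Sobolev embedding $\|f\|_{L^{m^*}}\le C_m\|\nabla f\|_{L^m}$ with $m^*=3m/(3-m)$ (proved via the Loomis--Whitney tensorization argument for $m=1$ and a Hölder bootstrap applied to $|f|^s$ for $1<m<3$). Since $q$ lies between $r$ and $m^*$, there is a unique $\theta\in[0,1]$ with $\frac{1}{q}=\frac{1-\theta}{r}+\frac{\theta}{m^*}$. The log-convexity of $L^p$-norms (three-line Hölder) then gives $\|f\|_{L^q}\le\|f\|_{L^r}^{1-\theta}\|f\|_{L^{m^*}}^{\theta}\le C\|\nabla f\|_{L^m}^{\theta}\|f\|_{L^r}^{1-\theta}$, and the identity $1/m^*=1/m-1/3$ shows $\theta=\vartheta$.

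\textbf{Case $m>3$.} I would first establish the endpoint $q=\infty$. For any ball $B_R(x)$, a Morrey-type line-integral estimate yields $|f(x)-f(y)|\le CR^{1-3/m}\|\nabla f\|_{L^m(B_R(x))}$ for a.e.\ $y\in B_R(x)$, so averaging $|f(y)|$ over $B_R(x)$ produces
\[
|f(x)|\le CR^{-3/r}\|f\|_{L^r}+CR^{1-3/m}\|\nabla f\|_{L^m}.
\]
Choosing $R$ to balance the two terms gives $\|f\|_{L^\infty}\le C\|\nabla f\|_{L^m}^{\vartheta_\infty}\|f\|_{L^r}^{1-\vartheta_\infty}$ with the scaling-forced $\vartheta_\infty=(1/r)/(1/r-1/m+1/3)$. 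The elementary pointwise bound $|f|^q\le|f|^r\|f\|_{L^\infty}^{q-r}$ then interpolates $\|f\|_{L^q}\le\|f\|_{L^r}^{r/q}\|f\|_{L^\infty}^{1-r/q}$; a short computation confirms this produces the asserted $\vartheta$ for every $q\in[r,\infty]$.

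\textbf{Case $m=3$.} This is the borderline case, where $m^*=\infty$ formally breaks the argument of Case $m<3$, while Morrey's inequality is not available at the critical exponent. I would handle it by a limiting/ interpolation argument: for each fixed $q\in[r,\infty)$, apply Case $m<3$ with $\tilde m=3-\delta$ chosen so that $q$ lies in the admissible range for $\tilde m$ and the same $r$, then control $\|\nabla f\|_{L^{\tilde m}}$ by combining $\|\nabla f\|_{L^3}$ with localized Hölder bounds using $\|f\|_{L^r}$ (equivalently, one could realize $f$ via a Riesz potential of $\nabla f$ and apply Hardy--Littlewood--Sobolev away from the endpoint). The trickiest part of the whole proof lies here: one must verify that the constant arising in Case $m<3$ remains uniformly bounded as $\delta\to 0^+$ for each fixed $q$, and that the algebra of the Hölder exponents collapses to the scaling-forced $\vartheta$ in the limit; scale invariance and a careful bookkeeping of the interpolation parameters are what make this step go through.
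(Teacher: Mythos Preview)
The paper does not actually prove this lemma; it simply records it as the classical Gagliardo--Nirenberg inequality with a citation to DiBenedetto's textbook. Your arguments for $m<3$ (Sobolev embedding plus H\"older interpolation) and $m>3$ (Morrey estimate plus optimization in $R$) are the standard ones and are correct.

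The critical case $m=3$ in your proposal, however, has a genuine gap. You plan to apply the subcritical case with $\tilde m=3-\delta$ and then ``control $\|\nabla f\|_{L^{\tilde m}}$ by combining $\|\nabla f\|_{L^3}$ with localized H\"older bounds using $\|f\|_{L^r}$''. On $\mathbb{R}^3$ this step cannot be carried out: there is no inequality of the form $\|\nabla f\|_{L^{\tilde m}}\le C\|\nabla f\|_{L^3}^\alpha\|f\|_{L^r}^{1-\alpha}$ when $\tilde m<3$ (scale invariance forces a specific $\alpha$, and testing on a sum of $N$ widely separated unit bumps then sends the left side to infinity faster than the right as $N\to\infty$). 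The Riesz-potential/HLS alternative you mention hits the same wall, since source exponent $3$ in dimension $3$ is precisely the forbidden HLS endpoint, and lowering the source exponent again demands $\nabla f\in L^{\tilde m}$ for some $\tilde m<3$, which you do not have. The standard repair is Nirenberg's power trick: apply the subcritical Sobolev inequality to $|f|^s$ with $s>1$ rather than to $f$ itself. For instance, $\||f|^s\|_{L^6}\le C\|\nabla|f|^s\|_{L^2}\le Cs\,\||f|^{s-1}\|_{L^6}\|\nabla f\|_{L^3}$ gives $\|f\|_{L^{6s}}^s\le Cs\,\|f\|_{L^{6(s-1)}}^{s-1}\|\nabla f\|_{L^3}$; choosing $s$ appropriately and interpolating with $\|f\|_{L^r}$ then yields the stated inequality for every finite $q\ge r$ with the scaling-determined exponent $\vartheta$.
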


\section{Proof of Theorem \ref{thm1}}

\subsection{\textit{A priori} estimates}
In this subsection, we will establish some necessary \textit{a priori} bounds for smooth solutions to the Cauchy problem \eqref{a1}--\eqref{a3}.
In what follows, we always assume that $(\rho, u, \theta, b)$
is a strong solution to the problem \eqref{a1}--\eqref{a3} in
$\mathbb{R}^3\times(0, T]$ for some positive time $T$.
Meanwhile, $C, C_i$, and $c_i\ (i=1, 2, \cdots)$ denote generic positive
constants which rely only on $R$,  $\mu$, $\lambda$, $\nu$, $\kappa$, and $c_v$. For simplicity, in what follows, we write
\begin{align*}
\int\cdot dx=\int_{\mathbb{R}^3}\cdot dx, \quad \nu=\kappa=c_v=1.
\end{align*}
\begin{lemma}\label{l31}
It holds that
\begin{align}\label{2.2}
&\sup_{0\le t\le T}(\|\sqrt{\rho}u\|_{L^2}^2+\|b\|_{L^2}^2)+
\int_0^T(\|\nabla u\|_{L^2}^2+\|\nabla b\|_{L^2}^2)dt\nonumber\\
&\le \|\sqrt{\rho_0}u_0\|_{L^2}^2+\|b_0\|_{L^2}^2+C\int_0^T\|\rho\|_{L^3}^2\|\nabla\theta\|_{L^2}^2dt.
\end{align}
\end{lemma}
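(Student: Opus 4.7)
The goal is a combined basic energy estimate for the fluid momentum and the magnetic field. The natural approach is to test the momentum equation against $u$, test the magnetic equation against $b$, and exploit the standard cancellation between the Lorentz force and the stretching/convection terms. The only nontrivial right-hand side that survives will be the pressure-velocity coupling, which is handled by Hölder and the Sobolev embedding $\dot H^1\hookrightarrow L^6$.

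\textbf{Step 1 (momentum energy).} Multiply $\eqref{a1}_2$ by $u$ and integrate. Using the continuity equation in the standard way to convert $\int\rho u_t\cdot u+\int\rho(u\cdot\nabla u)\cdot u$ into $\tfrac12\frac{d}{dt}\int\rho|u|^2$, and integrating the viscous term by parts, one obtains
\begin{align*}
\frac12\frac{d}{dt}\int\rho|u|^2\,dx+\mu\int|\nabla u|^2\,dx+(\lambda+\mu)\int(\divv u)^2\,dx
=R\int\rho\theta\,\divv u\,dx+\int(\curl b\times b)\cdot u\,dx.
\end{align*}

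\textbf{Step 2 (magnetic energy).} Multiply $\eqref{a1}_4$ by $b$ and integrate, using $\divv b=0$ to rewrite $\int(u\cdot\nabla b)\cdot b=-\tfrac12\int|b|^2\divv u\,dx$. This yields
\begin{align*}
\frac12\frac{d}{dt}\int|b|^2\,dx+\int|\nabla b|^2\,dx
=\int(b\cdot\nabla u)\cdot b\,dx-\frac12\int|b|^2\divv u\,dx.
\end{align*}

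\textbf{Step 3 (Lorentz cancellation).} The identity $\curl b\times b=(b\cdot\nabla)b-\tfrac12\nabla|b|^2$ together with integration by parts (again using $\divv b=0$) gives
\begin{align*}
\int(\curl b\times b)\cdot u\,dx=-\int(b\cdot\nabla u)\cdot b\,dx+\frac12\int|b|^2\divv u\,dx,
\end{align*}
so adding the identities from Steps 1 and 2 cancels the magnetic coupling terms exactly.

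\textbf{Step 4 (pressure term and closing).} The sole remaining right-hand side is $R\int\rho\theta\,\divv u\,dx$. I would apply Cauchy with a small parameter to absorb $\nabla u$ into the dissipation, then bound $\|\rho\theta\|_{L^2}\le\|\rho\|_{L^3}\|\theta\|_{L^6}\le C\|\rho\|_{L^3}\|\nabla\theta\|_{L^2}$ by Hölder and Lemma \ref{l22}. Dropping the non-negative term $(\lambda+\mu)\int(\divv u)^2$ (which is legitimate since the physical restriction \eqref{1.2} gives $\lambda+\mu\ge\mu/3>0$) and integrating in time from $0$ to $T$ delivers \eqref{2.2}.

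I do not expect any of these steps to be a serious obstacle: the only thing one has to be careful about is performing the integrations by parts with $\divv b=0$ in the right order so that the Lorentz coupling cancels cleanly, and ensuring that the bound $\|\theta\|_{L^6}\lesssim\|\nabla\theta\|_{L^2}$ is applicable, which follows from Lemma \ref{l22} with $m=r=2$, $q=6$ since $\theta$ decays at infinity.
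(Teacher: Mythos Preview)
Your proposal is correct and follows essentially the same route as the paper: test $\eqref{a1}_2$ against $u$ and $\eqref{a1}_4$ against $b$, use the Lorentz cancellation, and bound the surviving pressure term via H{\"o}lder and the Sobolev embedding $\|\theta\|_{L^6}\le C\|\nabla\theta\|_{L^2}$. The only cosmetic difference is that the paper absorbs $R\int\rho\theta\,\divv u\,dx$ into the $(\mu+\lambda)\|\divv u\|_{L^2}^2$ part of the dissipation rather than into $\mu\|\nabla u\|_{L^2}^2$ as you do; either choice works since $\mu+\lambda>0$ under \eqref{1.2}.
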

\begin{proof}[Proof]
Multiplying $\eqref{a1}_2$ by $u$ and $\eqref{a1}_4$ by $b$, respectively,
then adding the two resulting equations together and integrating over $\mathbb{R}^3$, we obtain from
$\mu+\lambda>0$\footnote{From \eqref{1.2}, we have $3\mu+3\lambda>0$. Thus the result follows.}, H{\"o}lder's inequality, and Sobolev's inequality that
\begin{align*}
& \frac12\frac{d}{dt}(\|\sqrt{\rho}u\|_{L^2}^2+\|b\|_{L^2}^2)+\mu\|\nabla u\|_{L^2}^2+(\mu+\lambda)\|\divv u\|_{L^2}^2+\|\nabla b\|_{L^2}\nonumber\\
& =R\int \rho\theta\divv udx
\le R\|\rho\|_{L^3}\|\theta\|_{L^6}\|\divv u\|_{L^2}
\le \frac{\mu+\lambda}{2}\|\divv u\|_{L^2}^2
+C\|\rho\|_{L^3}^2\|\nabla\theta\|_{L^2}^2,
\end{align*}
which implies that
\begin{align}\label{p11}
\frac{d}{dt}(\|\sqrt{\rho}u\|_{L^2}^2+\|b\|_{L^2}^2)+\|\nabla u\|_{L^2}^2+\|\nabla b\|_{L^2}^2
\le C\|\rho\|_{L^3}^2\|\nabla\theta\|_{L^2}^2.
\end{align}
Hence, integrating \eqref{p11} over $[0,T]$ leads to \eqref{2.2}.
\end{proof}

\begin{lemma}
It holds that
\begin{align}\label{2.4}
&\sup_{0\le t\le T}\big(\|\nabla b\|_{L^2}^2+\|b\|_{L^4}^4\big)+\int_0^T\big(\|b_t\|_{L^2}^2+\|\nabla^2b\|_{L^2}^2
+\||b||\nabla b|\|_{L^2}^2\big)dt\nonumber\\
&\le \|\nabla b_0\|_{L^2}^2+\|b_0\|_{L^4}^4
+C\sup_{0\le t\le T}\big(\|b\|_{L^2}^2\|\nabla b\|_{L^2}^2\big)^\frac32\int_0^T\|\nabla^2b\|_{L^2}^2dt\nonumber\\
&\quad+C\sup_{0\le t\le T}\big(\|b\|_{L^2}^2\|\nabla u\|_{L^2}^2\big)\int_0^T\|\nabla u\|_{L^2}^6dt.
\end{align}
\end{lemma}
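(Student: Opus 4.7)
The plan is to derive two complementary energy identities for $b$ from the magnetic equation and then reduce the stated estimate to controlling three mixed integrals.

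First I would test $b_t - b\cdot\nabla u + u\cdot\nabla b + b\,\divv u = \Delta b$ against $b_t - \Delta b$. The algebraic identity $|b_t - \Delta b|^2 = |b_t|^2 - 2 b_t\cdot\Delta b + |\Delta b|^2$ combined with the integration-by-parts relation $-\int b_t\cdot\Delta b\,dx = \tfrac12\tfrac{d}{dt}\|\nabla b\|_{L^2}^2$ and the elliptic fact $\|\nabla^2 b\|_{L^2}\le C\|\Delta b\|_{L^2}$ (which follows from $\divv b = 0$) yields
\begin{align*}
\frac{d}{dt}\|\nabla b\|_{L^2}^2 + \|b_t\|_{L^2}^2 + \|\nabla^2 b\|_{L^2}^2 \le C\int\bigl(|b|^2|\nabla u|^2 + |u|^2|\nabla b|^2\bigr)\,dx.
\end{align*}

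Next I would test with $|b|^2 b$ and exploit $\divv b = 0$ to simplify the transport and source contributions. The diffusion term integrates to $\int\Delta b\cdot|b|^2 b\,dx = -\tfrac12\|\nabla|b|^2\|_{L^2}^2 - \||b||\nabla b|\|_{L^2}^2$, while combining $(u\cdot\nabla b)\cdot|b|^2 b$ with $b\,\divv u\cdot|b|^2 b$ produces $\tfrac34\int\divv u\,|b|^4\,dx$; the remaining $(b\cdot\nabla u)\cdot|b|^2 b$ is likewise bounded by $\int|\nabla u|\,|b|^4\,dx$. Thus
\begin{align*}
\frac{d}{dt}\|b\|_{L^4}^4 + \||b||\nabla b|\|_{L^2}^2 + \|\nabla|b|^2\|_{L^2}^2 \le C\int|\nabla u|\,|b|^4\,dx.
\end{align*}

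Adding the two identities, I would estimate the right-hand sides using H\"older, Sobolev (in particular $\|u\|_{L^6}\le C\|\nabla u\|_{L^2}$ and $\|b\|_{L^\infty}^2\le C\|\nabla b\|_{L^2}\|\nabla^2 b\|_{L^2}$), and the interpolation $\|\nabla b\|_{L^2}^2\le C\|b\|_{L^2}\|\nabla^2 b\|_{L^2}$. For $\int|u|^2|\nabla b|^2$ and $\int|b|^2|\nabla u|^2$, successive applications of Young's inequality would produce terms of the shape $\epsilon\|\nabla^2 b\|_{L^2}^2 + C\|\nabla u\|_{L^2}^8\|b\|_{L^2}^2$; for $\int|\nabla u||b|^4\le \|\nabla u\|_{L^2}\|b\|_{L^8}^4$, a three-point Gagliardo-Nirenberg interpolation $\|b\|_{L^8}^4\le C\|b\|_{L^2}^{\alpha}\|\nabla b\|_{L^2}^{\beta}\|\nabla^2 b\|_{L^2}^{\gamma}$ with exponents tuned so that $\gamma=2$ would supply a clean factor of $\|\nabla^2 b\|_{L^2}^2$ multiplied by a coefficient that regroups into $(\|b\|_{L^2}^2\|\nabla b\|_{L^2}^2)^{3/2}$. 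After absorbing the $\epsilon$-terms into the dissipative left-hand side and integrating in $t$, the time integrals rearrange via
\begin{align*}
\int_0^T \|b\|_{L^2}^2\|\nabla u\|_{L^2}^8\,dt \le \sup_{[0,T]}\bigl(\|b\|_{L^2}^2\|\nabla u\|_{L^2}^2\bigr)\int_0^T\|\nabla u\|_{L^2}^6\,dt,
\end{align*}
and analogously for the $|b|^4$ contribution, delivering the stated inequality.

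The main obstacle will be the precise Young/Gagliardo-Nirenberg bookkeeping for the quartic coupling $\int|\nabla u||b|^4$: one needs exponents that produce exactly $\|\nabla^2 b\|_{L^2}^2$ (so it can sit inside the time integral on the right) together with a coefficient combining into $(\|b\|_{L^2}^2\|\nabla b\|_{L^2}^2)^{3/2}$ under a time-supremum, rather than a looser form that would only close via Gronwall. The smallness of $N_0$ will then allow these supremum factors to be absorbed on the left in the next stage of the proof.
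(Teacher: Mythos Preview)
Your overall strategy --- derive the two differential identities by testing the magnetic equation against $b_t-\Delta b$ and against $|b|^2b$, then close via Gagliardo--Nirenberg and Young --- is exactly the paper's approach, and your treatment of $\int|b|^2|\nabla u|^2$ and $\int|u|^2|\nabla b|^2$ is fine and leads to the $\|b\|_{L^2}^2\|\nabla u\|_{L^2}^8$ contribution as you claim.

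The gap is in your handling of the quartic coupling $\int|\nabla u|\,|b|^4$. The ``three-point'' inequality you propose,
\[
\|b\|_{L^8}^4\le C\|b\|_{L^2}^{\alpha}\|\nabla b\|_{L^2}^{\beta}\|\nabla^2 b\|_{L^2}^{2}
\quad\text{with coefficient regrouping to }(\|b\|_{L^2}^2\|\nabla b\|_{L^2}^2)^{3/2},
\]
cannot hold: the left side is homogeneous of degree $4$ in $b$ while the right would be degree $8$, so the inequality fails under $b\mapsto\lambda b$ for small $\lambda$. (In fact the sharp GN exponent on $\|\nabla^2 b\|_{L^2}$ for $\|b\|_{L^8}^4$ is $9/4$, not $2$.) More basically, the factor $\|\nabla u\|_{L^2}$ in $\|\nabla u\|_{L^2}\|b\|_{L^8}^4$ cannot simply vanish; any legitimate bound must retain a $u$-term on the right.

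The paper's fix is to apply Young \emph{pointwise} first: $|\nabla u|\,|b|^4\le |b|^2|\nabla u|^2+|b|^6$. The first piece merges with the term you already control and carries the $\|\nabla u\|_{L^2}$ dependence. For the second, one estimates
\[
\int|b|^6\,dx\le C\|b\|_{L^6}^2\||b|^2\|_{L^6}\||b|^2\|_{L^2}\le C\||b||\nabla b|\|_{L^2}\,\|b\|_{L^4}^2\,\|\nabla b\|_{L^2}^2,
\]
then interpolates $\|b\|_{L^4}^2\le C\|b\|_{L^3}\|\nabla b\|_{L^2}$, $\|\nabla b\|_{L^2}^2\le C\|b\|_{L^2}\|\nabla^2 b\|_{L^2}$, and $\|b\|_{L^3}\le C\|b\|_{L^2}^{1/2}\|\nabla b\|_{L^2}^{1/2}$ to reach
\[
C\||b||\nabla b|\|_{L^2}\,\|b\|_{L^2}^{3/2}\|\nabla b\|_{L^2}^{3/2}\|\nabla^2 b\|_{L^2}.
\]
A final Young then absorbs $\tfrac12\||b||\nabla b|\|_{L^2}^2$ (note: \emph{this} is the absorption target here, not $\|\nabla^2 b\|_{L^2}^2$) into the left and leaves exactly $C(\|b\|_{L^2}^2\|\nabla b\|_{L^2}^2)^{3/2}\|\nabla^2 b\|_{L^2}^2$. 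With this correction your argument closes.
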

\begin{proof}[Proof]
1. We deduce from $\eqref{a1}_4$ that
\begin{align}\label{2.5}
\frac{d}{dt}\|\nabla b\|_{L^2}^2+\|b_t\|_{L^2}^2+\|\nabla^2b\|_{L^2}^2
&=\int|b_t-\Delta b|^2dx=\int|b\cdot\nabla u-u\cdot\nabla b
-b\divv u|^2dx\nonumber\\
&\le C\|\nabla u\|_{L^2}^2\|b\|_{L^\infty}^2+C\|u\|_{L^6}^2\|\nabla b\|_{L^2}\|\nabla b\|_{L^6}\nonumber\\
&\le C\|b\|_{L^2}^\frac12\|\nabla^2b\|_{L^2}^\frac32\|\nabla u\|_{L^2}^2\nonumber\\
&\le \frac18\|\nabla^2b\|_{L^2}^2+C(\|b\|_{L^2}^2\|\nabla u\|_{L^2}^2)\|\nabla u\|_{L^2}^6,
\end{align}
where we have used the following Gagliardo-Nirenberg inequality
\begin{align}\label{2.6}
\|\nabla v\|_{L^2}\le C\|v\|_{L^2}^\frac12\|\nabla^2v\|_{L^2}^\frac12,
\ \|v\|_{L^\infty}\le C\|v\|_{L^2}^\frac14\|\nabla^2v\|_{L^2}^\frac34.
\end{align}

2. Multiplying $\eqref{a1}_4$ by $4|b|^2b$ and integration by parts, we get from \eqref{2.6} and Sobolev's inequality that
\begin{align*}
&\frac{d}{dt}\int|b|^4dx+4\int(|\nabla b|^2|b|^2+2|\nabla|b||^2|b|^2)dx\nonumber\\
&=4\int(b\cdot\nabla u-u\cdot\nabla b-b\divv u)|b|^2bdx\nonumber\\
&=4\int b\cdot\nabla u\cdot b|b|^2dx-3\int|b|^4\divv udx\nonumber\\
&\le C\int|b|^2|\nabla u|^2dx+C\int|b|^6dx\nonumber\\
&\le C\|b\|_{L^\infty}^2\|\nabla u\|_{L^2}^2+C\|b\|_{L^6}^2\||b|^2\|_{L^6}\||b|^2\|_{L^2}\nonumber\\
&\le C\|b\|_{L^2}^\frac12\|\nabla^2b\|_{L^2}^\frac32\|\nabla u\|_{L^2}^2
+C\||b||\nabla b|\|_{L^2}\|b\|_{L^4}^2\|\nabla b\|_{L^2}^2\nonumber\\
&\le \frac18\|\nabla^2 b\|_{L^2}^2
+C(\|b\|_{L^2}^2\|\nabla u\|_{L^2}^2)\|\nabla u\|_{L^2}^6
+C\||b||\nabla b|\|_{L^2}\|b\|_{L^3}\|\nabla b\|_{L^2}\|b\|_{L^2}\|\nabla^2b\|_{L^2}\nonumber\\
&\le \frac18\|\nabla^2 b\|_{L^2}^2
+C(\|b\|_{L^2}^2\|\nabla u\|_{L^2}^2)\|\nabla u\|_{L^2}^6
+C\||b||\nabla b|\|_{L^2}\|b\|_{L^2}^\frac32\|\nabla b\|_{L^2}^\frac32\|\nabla^2b\|_{L^2}\nonumber\\
&\le \frac18\|\nabla^2b\|_{L^2}^2+\frac12\||b||\nabla b|\|_{L^2}^2
+C\|b\|_{L^2}^3\|\nabla b\|_{L^2}^3\|\nabla^2b\|_{L^2}^2
+C(\|b\|_{L^2}^2\|\nabla u\|_{L^2}^2)\|\nabla u\|_{L^2}^6.
\end{align*}
This along with \eqref{2.5} yields that
\begin{align}\label{2.7}
&\frac{d}{dt}\big(\|\nabla b\|_{L^2}^2+\|b\|_{L^4}^4\big)+\|b_t\|_{L^2}^2+\|\nabla^2b\|_{L^2}^2+\||b||\nabla b|\|_{L^2}^2\nonumber\\
&\le C\big(\|b\|_{L^2}^2\|\nabla b\|_{L^2}^2\big)^\frac32\|\nabla^2b\|_{L^2}^2
+C\big(\|b\|_{L^2}^2\|\nabla u\|_{L^2}^2\big)\|\nabla u\|_{L^2}^6.
\end{align}
Thus, \eqref{2.4} follows from \eqref{2.7} integrated in $t$ over $[0,T]$.
\end{proof}

\begin{lemma}
Assume that $3\mu>\lambda$, it holds that
\begin{align}\label{2.8}
&\sup_{0\le t\le T}\|\sqrt{\rho}E\|_{L^2}^2
+\int_0^T\big(\||u||\nabla u|\|_{L^2}^2+\|\nabla\theta\|_{L^2}^2\big)dt\nonumber\\
&\le C\|\sqrt{\rho_0}E_0\|_{L^2}^2
+C\sup_{0\le t\le T}\big(\|b\|_{L^2}\|\nabla b\|_{L^2}\big)\int_0^T\|\nabla^2 b\|_{L^2}^2dt\nonumber\\
&\quad+C\sup_{0\le t\le T}\big(\|b\|_{L^2}^2\|\nabla b\|_{L^2}^2\big)\int_0^T\|\nabla u\|_{L^2}^6dt+C\sup_{0\le t\le T}\|\nabla b\|_{L^2}^4\int_0^T\|\nabla u\|_{L^2}^2dt\nonumber\\
&\quad+C\int_0^T\|\sqrt{\rho}\theta\|_{L^2}\|\nabla\theta\|_{L^2}\||u||\nabla u|\|_{L^2}\|\rho\|_{L^3}^\frac{1}{2}dt.
\end{align}
\end{lemma}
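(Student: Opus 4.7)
The plan is to control $\|\sqrt{\rho}E\|_{L^2}^2$ by separately estimating $\|\sqrt{\rho}|u|^2\|_{L^2}^2$ and $\|\sqrt{\rho}\theta\|_{L^2}^2$, which are comparable to the former up to the cross term $\int\rho|u|^2\theta\,dx\le \|\sqrt{\rho}|u|^2\|_{L^2}\|\sqrt{\rho}\theta\|_{L^2}$. To this end I would test $\eqref{a1}_2$ against $u|u|^2$ and $\eqref{a1}_3$ against $\theta$, sum the resulting identities, and integrate in time.

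Testing $\eqref{a1}_2$ by $u|u|^2$, integrating over $\mathbb{R}^3$, and using the continuity equation yields, after several integrations by parts,
\begin{align*}
&\tfrac14\tfrac{d}{dt}\|\sqrt{\rho}|u|^2\|_{L^2}^2 + \mu\||u||\nabla u|\|_{L^2}^2 + 2\mu\big\|\nabla\tfrac{|u|^2}{2}\big\|_{L^2}^2 + (\mu+\lambda)\||u|\divv u\|_{L^2}^2 \\
&\quad + 2(\mu+\lambda)\int(\divv u)\big(u\cdot\nabla\tfrac{|u|^2}{2}\big)\,dx = R\int\rho\theta\big[|u|^2\divv u + 2u\cdot\nabla\tfrac{|u|^2}{2}\big]\,dx + \int(\curl b\times b)\cdot u|u|^2\,dx.
\end{align*}
The quadratic form in $\nabla u$ on the left (weighted by $|u|^2$) has an indefinite cross term, but combining Young's inequality with the pointwise bound $|u\cdot\nabla(|u|^2/2)|\le |u|^2|\nabla u|$ shows that the hypothesis $3\mu>\lambda$ is exactly what forces a coercive residue $\ge c\||u||\nabla u|\|_{L^2}^2$ for some $c=c(\mu,\lambda)>0$. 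Testing $\eqref{a1}_3$ by $\theta$ gives the companion identity
\begin{align*}
\tfrac12\tfrac{d}{dt}\|\sqrt{\rho}\theta\|_{L^2}^2 + \|\nabla\theta\|_{L^2}^2 = -R\int\rho\theta^2\divv u\,dx + \int\theta\,\mathcal{Q}(\nabla u)\,dx + \int\theta|\curl b|^2\,dx.
\end{align*}

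I would then control the right-hand sides term by term. For the pressure contributions, writing $p=R\rho\theta$ and splitting $\rho=\sqrt{\rho}\cdot\sqrt{\rho}$, H\"older's inequality together with the Gagliardo--Nirenberg bound $\|\theta\|_{L^6}\le C\|\nabla\theta\|_{L^2}$ from Lemma \ref{l22} produces the term $C\|\sqrt{\rho}\theta\|_{L^2}\|\nabla\theta\|_{L^2}\||u||\nabla u|\|_{L^2}\|\rho\|_{L^3}^{1/2}$ appearing last on the right of \eqref{2.8}, the remaining pressure pieces being absorbable into the dissipation. The viscous source $\int\theta\,\mathcal{Q}(\nabla u)\,dx$ is handled by H\"older's inequality and Sobolev embedding. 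For the magnetic contributions, $\divv b=0$ yields $|\curl b|^2=|\nabla b|^2$; then using $\|u\|_{L^6}\le C\|\nabla u\|_{L^2}$, $\|b\|_{L^\infty}\le C\|b\|_{L^2}^{1/4}\|\nabla^2 b\|_{L^2}^{3/4}$ as in \eqref{2.6}, and interpolation of $\nabla b$ between $L^2$ and $H^1$, one obtains the three magnetic quantities on the right of \eqref{2.8}, possibly after absorbing a small multiple of $\|\nabla^2 b\|_{L^2}^2$ controlled by \eqref{2.4}.

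The main obstacle is the extraction of the weighted dissipation $\||u||\nabla u|\|_{L^2}^2$. The indefinite cross term $2(\mu+\lambda)\int(\divv u)(u\cdot\nabla(|u|^2/2))\,dx$ is the crux: a naive Cauchy--Schwarz loses too much, and only a careful combination of Young's inequality with the pointwise estimate $|u\cdot\nabla(|u|^2/2)|\le |u|^2|\nabla u|$ produces a residue with positive coefficient --- and precisely under the restriction $3\mu>\lambda$, which is why this hypothesis enters the lemma. Once the dissipation is secured, adding the two identities, bounding the RHS, and integrating over $[0,T]$ yields \eqref{2.8}.
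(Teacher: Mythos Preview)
Your proposal has a genuine gap at the viscous heating term. When you test the temperature equation $\eqref{a1}_3$ by $\theta$, the right-hand side contains $\int\theta\,\mathcal{Q}(\nabla u)\,dx\sim\int\theta|\nabla u|^2\,dx$, and you dismiss this as ``handled by H\"older's inequality and Sobolev embedding''. But this cubic term carries no density weight, so there is no way to reduce it to the five quantities on the right of \eqref{2.8}: Sobolev gives at best $\|\nabla\theta\|_{L^2}\|\nabla u\|_{L^{12/5}}^2$, which forces a factor of $\|\nabla u\|_{L^6}$ (hence $\nabla^2u$) that is unavailable here, and no splitting produces the combination $\|\sqrt{\rho}\theta\|_{L^2}\|\nabla\theta\|_{L^2}\||u||\nabla u|\|_{L^2}\|\rho\|_{L^3}^{1/2}$ since $\rho$ simply is not present. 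The same issue arises with $\int\theta|\curl b|^2\,dx$.

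The paper avoids this entirely by multiplying the \emph{total energy} equation \eqref{b5} by $E$. In forming \eqref{b5} the viscous heating $\mathcal{Q}(\nabla u)$ combines with the viscous force from the momentum equation into the divergence $\divv(\mathcal{S}\cdot u)$; after integration by parts against $E$ this yields $\int(\mathcal{S}\cdot u)\cdot\nabla E\,dx$, which is bounded by $C\||u||\nabla u|\|_{L^2}\big(\||u||\nabla u|\|_{L^2}+\|\nabla\theta\|_{L^2}\big)$ and hence absorbable. The test of $\eqref{a1}_2$ against $u|u|^2$ is still needed (and the paper does use it, citing \eqref{2.9}), but only to supply the dissipation $\||u||\nabla u|\|_{L^2}^2$ that compensates the constant $c_1$ in \eqref{2.12}; it does not by itself generate a cancellation for $\int\theta\,\mathcal{Q}(\nabla u)\,dx$. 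A minor point: your claim that $\divv b=0$ gives $|\curl b|^2=|\nabla b|^2$ is false pointwise; only the $L^2$ norms coincide.
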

\begin{proof}[Proof]
1. For $E=\frac{|u|^2}{2}+c_v\theta$, we infer from \eqref{a1} that
\begin{align}\label{b5}
\rho (E_t+ u\cdot\nabla E)+\divv(up)-\Delta\theta=\divv(\mathcal{S}\cdot u)+\curl b\times b
+|\curl b|^2,
\end{align}
where $\mathcal{S}=\mu(\nabla u+(\nabla u)^\top)+\lambda\divv u\mathbb{I}_3$ with $\mathbb{I}_3$ being the identity matrix of order $3$.
Multiplying \eqref{b5} by $E$ and integrating the resultant
over $\mathbb{R}^3$, it follows from integration by parts and Young's inequality that
\begin{align}\label{2.11}
\frac{1}{2}\frac{d}{dt}\|\sqrt{\rho}E\|_{L^2}^2+\|\nabla\theta\|_{L^2}^2
&\le -\frac{1}{2}\int\nabla\theta\cdot\nabla|u|^2dx+\int(up-\mathcal{S}\cdot u)\cdot \nabla Edx\nonumber\\
&\quad+C\int\big(|u||b|^2|\nabla E|+|\nabla u||b|^2E\big)dx
+\int |\curl b|^2Edx\nonumber\\
&\le \frac{1}{6}\|\nabla\theta\|_{L^2}^2+\frac{3}{8}\||u||\nabla u|\|_{L^2}^2+C\int\rho^2\theta^2|u|^2dx\nonumber\\
&\quad+C\int\big(|u||b|^2|\nabla E|+|\nabla u||b|^2E\big)dx+C\int|\nabla E||\nabla b||b|dx\nonumber\\
&\quad+C\int|E||\nabla^2 b||b|dx\triangleq\sum_{i=1}^6I_i.
\end{align}
Applying H\"older's inequality and Sobolev's inequality, we have that
\begin{align*}
I_3&\le C\|\sqrt{\rho}\theta\|_{L^2}\|\theta\|_{L^6}\||u|^2\|_{L^6}\|\rho\|_{L^9}^\frac{3}{2}\nonumber\\
&\le C\|\rho\|_{L^\infty}\|\sqrt{\rho}\theta\|_{L^2}\|\nabla\theta\|_{L^2}\||u||\nabla u|\|_{L^2}\|\rho\|_{L^3}^\frac{1}{2},\\
I_4&\le C\|u\|_{L^6}\||b|^2\|_{L^3}\|\nabla E\|_{L^2}+C\|\nabla u\|_{L^2}\||b|^2\|_{L^3}\|E\|_{L^6}\nonumber\\
&\le C\|\nabla u\|_{L^2}\|b\|_{L^6}^2\|\nabla E\|_{L^2}\nonumber\\
&\le \frac18\|\nabla\theta\|_{L^2}^2+C\||u||\nabla u|\|_{L^2}^2
+C\|\nabla b\|_{L^2}^4\|\nabla u\|_{L^2}^2,\\
I_5+I_6&\le C\|E\|_{L^6}\|\nabla^2 b\|_{L^2}\|b\|_{L^3}+
C\|\nabla E\|_{L^2}\|\nabla b\|_{L^6}\|b\|_{L^3}\nonumber\\
&\le C\|\nabla E\|_{L^2}\|b\|_{L^3}\|\nabla^2 b\|_{L^2}\nonumber\\
&\le \frac{1}{6}\|\nabla\theta\|_{L^2}^2+C\||u||\nabla u|\|_{L^2}^2+C(\|b\|_{L^2}\|\nabla b\|_{L^2})\|\nabla^2 b\|_{L^2}^2.
\end{align*}
Substituting the above inequalities into \eqref{2.11} yields that
\begin{align}\label{2.12}
\frac{d}{dt}\|\sqrt{\rho}E\|_{L^2}^2+\|\nabla\theta\|_{L^2}^2
&\le c_1\||u||\nabla u|\|_{L^2}^2+C(\|b\|_{L^2}\|\nabla b\|_{L^2})\|\nabla^2 b\|_{L^2}^2
+C\|\nabla b\|_{L^2}^4\|\nabla u\|_{L^2}^2\nonumber\\
&\quad+C\|\sqrt{\rho}\theta\|_{L^2}\|\nabla\theta\|_{L^2}\||u||\nabla u|\|_{L^2}\|\rho\|_{L^3}^\frac{1}{2}.
\end{align}

2. Exactly in the same way as that in \cite[Lemma 3.4]{LZ20}, we find that
\begin{align}\label{2.9}
\frac{d}{dt}\|\rho^\frac{1}{4}u\|_{L^4}^4+\||u||\nabla u|\|_{L^2}^2
&\le C\int\rho^2\theta^2|u|^2dx+C\|b\|_{L^3}^2\|\nabla^2b\|_{L^2}^2+C\|b\|_{L^3}^4\|\nabla u\|_{L^2}^6\nonumber\\
&\le C(\|b\|_{L^2}\|\nabla b\|_{L^2})\|\nabla^2b\|_{L^2}^2
+C(\|b\|_{L^2}^2\|\nabla b\|_{L^2}^2)\|\nabla u\|_{L^2}^6\nonumber\\
&\quad+C\|\sqrt{\rho}\theta\|_{L^2}\|\theta\|_{L^6}\||u|^2\|_{L^6}\|\rho\|_{L^9}^\frac{3}{2}\nonumber\\
&\le C(\|b\|_{L^2}\|\nabla b\|_{L^2})\|\nabla^2b\|_{L^2}^2
+C(\|b\|_{L^2}^2\|\nabla b\|_{L^2}^2)\|\nabla u\|_{L^2}^6\nonumber\\
&\quad+C\|\sqrt{\rho}\theta\|_{L^2}\|\nabla\theta\|_{L^2}\||u||\nabla u|\|_{L^2}\|\rho\|_{L^3}^\frac{1}{2},
\end{align}
due to H{\"o}lder's inequality and Sobolev's inequality.
Adding \eqref{2.12} to \eqref{2.9} multiplied by $2c_1$, one gets that
\begin{align}\label{2.13}
&\frac{d}{dt}\big(\|\sqrt{\rho}E\|_{L^2}^2+2c_1\|\rho^\frac14u\|_{L^4}^4\big)
+c_1\||u||\nabla u|\|_{L^2}^2+\|\nabla\theta\|_{L^2}^2\nonumber\\
&\le C(\|b\|_{L^2}\|\nabla b\|_{L^2})\|\nabla^2 b\|_{L^2}^2
+C(\|b\|_{L^2}^2\|\nabla b\|_{L^2}^2)\|\nabla u\|_{L^2}^6+C\|\nabla b\|_{L^2}^4\|\nabla u\|_{L^2}^2\nonumber\\
&\quad+C\|\sqrt{\rho}\theta\|_{L^2}\|\nabla\theta\|_{L^2}\||u||\nabla u|\|_{L^2}\|\rho\|_{L^3}^\frac{1}{2}.
\end{align}
From which, the conclusion follows by integrating \eqref{2.13} over $[0,T]$.
\end{proof}

\begin{lemma}
Assume that $3\mu>\lambda$, it holds that
\begin{align}\label{2.14}
\sup_{0\le t\le T}\|\rho\|_{L^3}^3+\int_0^T\int\rho^3pdxdt
&\le \|\rho_0\|_{L^3}^3
+C\sup_{0\le t\le T}\Big(\|\rho\|_{L^\infty}^\frac23\|\sqrt{\rho}u\|_{L^2}^\frac12
\|\sqrt{\rho}|u|^2\|_{L^2}^\frac13\|\rho\|_{L^3}^3\Big)\nonumber\\
&\quad +C\int_0^T\|\rho\|_{L^\infty}^2\|\rho\|_{L^3}^2\|\nabla u\|_{L^2}^2dt
+C\int_0^T\|\rho\|_{L^\infty}\|\rho\|_{L^3}^2\|\nabla b\|_{L^2}^2dt.
\end{align}
\end{lemma}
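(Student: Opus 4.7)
My plan is to use the \emph{effective viscous flux} approach. Multiplying the continuity equation $\eqref{a1}_1$ by $3\rho^2$ and integrating over $\mathbb{R}^3$ gives the basic identity
\begin{align*}
\frac{d}{dt}\|\rho\|_{L^3}^3 + 2\int\rho^3\divv u\,dx = 0.
\end{align*}
I introduce the effective viscous flux $F := (2\mu+\lambda)\divv u - p - \tfrac12|b|^2$. Since $\divv b = 0$ and $\curl b\times b = \divv(b\otimes b) - \tfrac12\nabla|b|^2$, taking the divergence of $\eqref{a1}_2$ yields
\begin{align*}
\Delta F = \divv(\rho\dot u) - \partial_i\partial_j(b_ib_j),\qquad \dot u := u_t + u\cdot\nabla u,
\end{align*}
with $F\to0$ at infinity. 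Substituting $\divv u = (F+p+\tfrac12|b|^2)/(2\mu+\lambda)$ and noting that $\int\rho^3 p\,dx = R\int\rho^4\theta\,dx$ is the dissipation to keep on the left, we obtain
\begin{align*}
\frac{d}{dt}\|\rho\|_{L^3}^3 + \frac{2}{2\mu+\lambda}\int\rho^3 p\,dx = -\frac{2}{2\mu+\lambda}\int\rho^3 F\,dx - \frac{1}{2\mu+\lambda}\int\rho^3|b|^2\,dx.
\end{align*}
The magnetic-pressure term is routine: H\"older and Sobolev give $\int\rho^3|b|^2\,dx \le \|\rho\|_{L^\infty}\|\rho\|_{L^3}^2\|b\|_{L^6}^2 \le C\|\rho\|_{L^\infty}\|\rho\|_{L^3}^2\|\nabla b\|_{L^2}^2$, matching the last term of \eqref{2.14}.

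The heart of the proof is to control $\int_0^T\!\int\rho^3 F\,dx\,dt$. I use a duality argument: let $\varphi := (-\Delta)^{-1}\rho^3$, i.e.\ $\varphi$ solves $-\Delta\varphi = \rho^3$ in $\mathbb{R}^3$ with $\varphi\to0$ at infinity. Two spatial integrations by parts combined with the equation for $F$ give
\begin{align*}
\int\rho^3 F\,dx = \int\nabla\varphi\cdot\rho\dot u\,dx + \int\partial_i\partial_j\varphi\,b_ib_j\,dx.
\end{align*}
The $b\otimes b$ contribution is handled by the Calder\'on-Zygmund bound $\|\nabla^2\varphi\|_{L^{3/2}}\le C\|\rho^3\|_{L^{3/2}} = C\|\rho\|_{L^{9/2}}^3 \le C\|\rho\|_{L^\infty}\|\rho\|_{L^3}^2$ (interpolating $\|\rho\|_{L^{9/2}}$ between $L^3$ and $L^\infty$) paired with $\|b\|_{L^6}^2\le C\|\nabla b\|_{L^2}^2$, giving a term absorbed into the last RHS term of \eqref{2.14}.

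For the material-derivative piece I use the conservative identity $\rho\dot u = (\rho u)_t + \divv(\rho u\otimes u)$ and integrate by parts in both time and space,
\begin{align*}
\int_0^T\!\!\int\nabla\varphi\cdot\rho\dot u\,dx\,dt &= \bigg[\int\nabla\varphi\cdot\rho u\,dx\bigg]_0^T \\
&\quad - \int_0^T\!\!\int(\nabla\varphi)_t\cdot\rho u\,dx\,dt - \int_0^T\!\!\int\nabla^2\varphi:(\rho u\otimes u)\,dx\,dt.
\end{align*}
The third spatial-IBP term is controlled by
\begin{align*}
\|\nabla^2\varphi\|_{L^3}\,\|\rho|u|^2\|_{L^{3/2}} \le C\|\rho\|_{L^9}^3\|\rho\|_{L^3}\|u\|_{L^6}^2 \le C\|\rho\|_{L^\infty}^2\|\rho\|_{L^3}^2\|\nabla u\|_{L^2}^2,
\end{align*}
yielding the third term of \eqref{2.14}. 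The $(\nabla\varphi)_t$ term is handled via the continuity equation $(\rho^3)_t = -\divv(\rho^3 u) - 2\rho^3\divv u$ together with Riesz/CZ boundedness, producing contributions of the same form. The boundary term at $t=T$ is the source of the $\sup_t(\cdot)$ contribution in \eqref{2.14}: Hardy-Littlewood-Sobolev $\|\nabla\varphi\|_{L^s}\le C\|\rho\|_{L^{3q}}^3$ (with $1/s=1/q-1/3$), interpolation of $\rho$ between $L^3$ and $L^\infty$, and careful interpolation of $\|\rho u\|_{L^{s'}}$ using both $\|\sqrt\rho u\|_{L^2}$ and $\|\sqrt\rho|u|^2\|_{L^2}$ (via the weighted H\"older inequality $\int\rho|u|^k\,dx \le \|\sqrt\rho u\|_{L^2}^{4-k}\|\sqrt\rho|u|^2\|_{L^2}^{k-2}$ valid for $2\le k\le 4$) combine to reproduce the precise form $\|\rho\|_{L^\infty}^{2/3}\|\sqrt\rho u\|_{L^2}^{1/2}\|\sqrt\rho|u|^2\|_{L^2}^{1/3}\|\rho\|_{L^3}^3$.

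The main obstacle is the delicate exponent matching in the boundary term: one has to perform two simultaneous interpolations, one for $\rho$ (between $L^3$ and $L^\infty$) and one for the momentum (using both $\sqrt\rho u$ in $L^2$ and $\sqrt\rho|u|^2$ in $L^2$), with a judicious choice of dual H\"older indices so that the final product reads exactly $\|\rho\|_{L^\infty}^{2/3}\|\sqrt\rho u\|_{L^2}^{1/2}\|\sqrt\rho|u|^2\|_{L^2}^{1/3}\|\rho\|_{L^3}^3$; this is the key technical point that motivates the unusual fractional exponents $\tfrac23,\tfrac12,\tfrac13$ in \eqref{2.14}.
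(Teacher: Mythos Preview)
Your approach is essentially the paper's argument written in dual form: the paper applies $\Delta^{-1}\divv$ to the momentum equation and multiplies the result by $\rho^3$, while you multiply the continuity equation by $3\rho^2$, substitute $\divv u$ via the effective viscous flux, and then test $F$ against $\rho^3$ through the potential $\varphi=(-\Delta)^{-1}\rho^3$. Since $\int\nabla\varphi\cdot\rho u\,dx=\int\rho^3\,\Delta^{-1}\divv(\rho u)\,dx$ and $\int\nabla^2\varphi:(\rho u\otimes u)\,dx=\int\rho^3\,\Delta^{-1}\divv\divv(\rho u\otimes u)\,dx$, your decomposition produces exactly the same terms as the paper's $J_1,J_2$; your bounds for the convective, the $(\nabla\varphi)_t$, and the $b\otimes b$ pieces all match the paper's $C\|\rho\|_{L^\infty}^2\|\rho\|_{L^3}^2\|\nabla u\|_{L^2}^2$ and $C\|\rho\|_{L^\infty}\|\rho\|_{L^3}^2\|\nabla b\|_{L^2}^2$.

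There is one genuine problem, however, and it lies in the boundary term. The exponent $\tfrac12$ on $\|\sqrt{\rho}u\|_{L^2}$ in \eqref{2.14} is a typo in the paper: it should be $\tfrac13$, as the paper's own proof shows (via $\|\Delta^{-1}\divv(\rho u)\|_{L^\infty}\le C\|\rho u\|_{L^2}^{1/3}\|\rho u\|_{L^4}^{2/3}\le C\|\rho\|_{L^\infty}^{2/3}\|\sqrt{\rho}u\|_{L^2}^{1/3}\|\sqrt{\rho}|u|^2\|_{L^2}^{1/3}$, paired with $\|\rho^3\|_{L^1}=\|\rho\|_{L^3}^3$) and as the later Lemma~\ref{l28} confirms when the estimate is used. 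A quick scaling check shows the combination $\|\rho\|_{L^\infty}^{2/3}\|\sqrt{\rho}u\|_{L^2}^{1/2}\|\sqrt{\rho}|u|^2\|_{L^2}^{1/3}\|\rho\|_{L^3}^3$ is dimensionally inconsistent (under $\rho\mapsto\alpha\rho$, $u\mapsto\beta u$ the left side scales like $\alpha^4\beta$ but this product scales like $\alpha^{49/12}\beta^{7/6}$), so no ``judicious choice of dual H\"older indices'' can produce it. Replace the target $\tfrac12$ by $\tfrac13$ and your argument goes through; in fact the simplest route is the paper's direct $L^\infty$--$L^1$ pairing on $\int\rho^3\,\Delta^{-1}\divv(\rho u)\,dx$ rather than the HLS/interpolation scheme you sketch.
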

\begin{proof}
Applying the operator $\Delta^{-1}{\rm div}$ to $\eqref{a1}_2$ leads to
\begin{align}\label{2.15}
\Delta^{-1}\divv(\rho u)_t+\Delta^{-1}\divv\divv(\rho u\otimes u)-(2\mu+\lambda)\divv u+p+\frac{|b|^2}{2}=
\Delta^{-1}\divv\divv(b\otimes b).
\end{align}
In view of $\eqref{a1}_1$, one obtains that
\begin{align}\label{2.16}
\partial_t\rho^3+\divv(u\rho^3)+2\divv u\rho^3=0.
\end{align}
Then, multiplying \eqref{2.15} by $\rho^3$ and using \eqref{2.16}, we have
\begin{align}\label{2.17}
\frac{2\mu+\lambda}{2}(\partial_t\rho^3+\divv(u\rho^3))+\rho^3p
+\rho^3\Delta^{-1}\divv(\rho u)_t+\rho^3\Delta^{-1}\divv\divv(\rho u\otimes u)
=\rho^3\Delta^{-1}\divv\divv(b\otimes b).
\end{align}
Using \eqref{2.16} again, we deduce that
\begin{align*}
\int\rho^3\Delta^{-1}\divv(\rho u)_tdx
&=\frac{d}{dt}\int\rho^3\Delta^{-1}\divv(\rho u)dx
+\int[\divv(\rho^3u)+2\divv u\rho^3]\Delta^{-1}\divv(\rho u)dx\nonumber\\
&\le \int[2\divv u\rho^3\Delta^{-1}\divv(\rho u)-\rho^3u\cdot\nabla\Delta^{-1}\divv(\rho u)]dx
+\frac{d}{dt}\int\rho^3\Delta^{-1}\divv(\rho u)dx,
\end{align*}
which combined with \eqref{2.17} yields that
\begin{align}\label{2.19}
&\frac{d}{dt}\int\Big(\frac{2\mu+\lambda}{2}+\Delta^{-1}\divv (\rho u)\Big)\rho^3dx+\int\rho^3pdx\nonumber\\
&=\int[\rho^3(u\cdot\nabla\Delta^{-1}\divv (\rho u))-\Delta^{-1}\divv \divv (\rho u\otimes u)
-2\divv u\rho^3\Delta^{-1}(\rho u)]dx\nonumber\\
&\quad+\int\rho^3\Delta^{-1}\divv\divv(b\otimes b)dx
\triangleq J_1+J_2.
\end{align}
By virtue of H\"older's inequality and Sobolev's inequality, we derive that
\begin{align*}
|J_1| & \le C\|\rho\|_{L^\infty}^2\|\rho\|_{L^3}^2\|\nabla u\|_{L^2}^2,\\
|J_2| & \le \|\rho\|_{L^\infty}\|\rho\|_{L^3}^2\|\Delta^{-1}\divv \divv (b\otimes b)\|_{L^3}
\le C\|\rho\|_{L^\infty}\|\rho\|_{L^3}^2\|b\|_{L^6}^2\le C\|\rho\|_{L^\infty}\|\rho\|_{L^3}^2\|\nabla b\|_{L^2}^2,
\end{align*}
which together with \eqref{2.19} implies that
\begin{align}\label{2.22}
&\frac{d}{dt}\int\Big(\frac{2\mu+\lambda}{2}+\Delta^{-1}\divv (\rho u)\Big)\rho^3dx+\int\rho^3pdx\nonumber\\
&\leq C\|\rho\|_{L^\infty}^2\|\rho\|_{L^3}^2\|\nabla u\|_{L^2}^2
+C\|\rho\|_{L^\infty}^2\|\rho\|_{L^3}^2\|\nabla b\|_{L^2}^2.
\end{align}
Noticing that
\begin{align*}
\|\Delta^{-1}\divv (\rho u)\|_{L^\infty}&\le C\|\Delta^{-1}\divv (\rho u)\|_{L^6}^\frac13
\|\nabla\Delta^{-1}\divv (\rho u)\|_{L^4}^\frac23\nonumber\\
&\le C\|\rho u\|_{L^2}^\frac12\|\rho u\|_{L^4}^\frac23\le C\|\rho\|_{L^\infty}^\frac23
\|\sqrt{\rho}u\|_{L^2}^\frac13\|\sqrt{\rho}|u|^2\|_{L^2}^\frac13.
\end{align*}
This along with \eqref{2.22} integrated in $t$ over $[0,T]$ leads to \eqref{2.14}.
\end{proof}

\begin{lemma}\label{lem}
Assume that
\begin{align}\label{2.23}
\sup_{0\le t\le T}\|\rho\|_{L^\infty}\le 4\bar{\rho},
\end{align}
then it holds that
\begin{align}\label{2.24}
&\sup_{0\le t\le T}\|\nabla u\|_{L^2}^2+\int_0^T\Big\|\Big(\sqrt{\rho}u_t, \frac{\nabla F}{\sqrt{\bar{\rho}}},
\frac{\nabla w}{\sqrt{\bar{\rho}}}\Big)\Big\|_{L^2}^2dt\nonumber\\
&\le C\|\nabla u_0\|_{L^2}^2+C\bar{\rho}\sup_{0\le t\le T}\|\sqrt{\rho}\theta\|_{L^2}^2+C\sup_{0\le t\le T}\|b\|_{L^4}^4
+\eta\bar{\rho}\int_0^T\||b||\nabla b|\|_{L^2}^2dt
\nonumber\\
&\quad+C\bar{\rho}^3\int_0^T\|\nabla u\|_{L^2}^4\big(\|\nabla u\|_{L^2}^2+\bar{\rho}\|\sqrt{\rho}\theta\|_{L^2}^2+\|b\|_{L^4}^4\big)dt
+C\bar{\rho}\int_0^T\|\nabla b\|_{L^2}^4\|\nabla u\|_{L^2}^2dt\nonumber\\
&\quad+C\int_0^T\Big(\bar{\rho}^2\|\rho\|_{L^3}^\frac12
\|\sqrt{\rho}\theta\|_{L^2}+\bar{\rho}\Big)
\big(\|\nabla\theta\|_{L^2}^2+\||u||\nabla u|\|_{L^2}^2\big)dt
+\varepsilon_1\bar{\rho}\int_0^T\|\nabla\theta\|_{L^2}^2dt\nonumber\\
&\quad+C\bar{\rho}\int_0^T\|b\|_{L^2}\|\nabla b\|_{L^2}\|b_t\|_{L^2}^2dt,
\end{align}
where $F=(2\mu+\lambda)\divv u-p-\frac{|b|^2}{2}$ and $w=\curl u$.
\end{lemma}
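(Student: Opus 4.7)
My approach combines the effective viscous flux framework with a $u_t$-test of the momentum equation. Using $\mu\Delta u+(\mu+\lambda)\nabla\divv u=\nabla[(2\mu+\lambda)\divv u]-\mu\curl w$ together with the identity $\curl b\times b=(b\cdot\nabla)b-\tfrac{1}{2}\nabla|b|^2$, I first rewrite $\eqref{a1}_2$ as
$$\rho\dot{u}=\nabla F-\mu\curl w+(b\cdot\nabla)b,\qquad \dot{u}:=u_t+u\cdot\nabla u.$$
Taking the divergence and curl and using $\divv\curl w=\divv b=\divv w=0$ yields the Poisson problems $\Delta F=\divv[\rho\dot{u}-(b\cdot\nabla)b]$ and $\mu\Delta w=\curl[\rho\dot{u}-(b\cdot\nabla)b]$, so standard elliptic estimates give
$$\|\nabla F\|_{L^2}^2+\|\nabla w\|_{L^2}^2\le C\|\rho\dot{u}\|_{L^2}^2+C\||b||\nabla b|\|_{L^2}^2\le C\bar{\rho}\|\sqrt{\rho}\dot{u}\|_{L^2}^2+C\||b||\nabla b|\|_{L^2}^2$$
after invoking \eqref{2.23}. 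Dividing by $\bar{\rho}$ and using $\|\sqrt{\rho}\dot{u}\|_{L^2}^2\le 2\|\sqrt{\rho}u_t\|_{L^2}^2+2\bar{\rho}\||u||\nabla u|\|_{L^2}^2$ will recover the $\|\nabla F\|_{L^2}^2/\bar{\rho}$ and $\|\nabla w\|_{L^2}^2/\bar{\rho}$ contributions to the LHS of \eqref{2.24} once the $\|\sqrt{\rho}u_t\|_{L^2}^2$ piece is bounded separately.

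\textbf{Main identity.} Multiplying $\eqref{a1}_2$ by $u_t$ and integrating by parts packages the viscous, pressure, and magnetic contributions into
$$\frac{d}{dt}\Phi(t)+\|\sqrt{\rho}u_t\|_{L^2}^2=-\!\int\!\rho u\cdot\nabla u\cdot u_t\,dx-\!\int\! p_t\divv u\,dx-\!\int\!\partial_t(\curl b\times b)\cdot u\,dx,$$
where
$$\Phi(t):=\tfrac{\mu}{2}\|\nabla u\|_{L^2}^2+\tfrac{\mu+\lambda}{2}\|\divv u\|_{L^2}^2-\int p\divv u\,dx-\int(\curl b\times b)\cdot u\,dx.$$
The convection integral is absorbed by Cauchy--Schwarz, leaving $C\bar{\rho}\||u||\nabla u|\|_{L^2}^2$ to be controlled via \eqref{2.8}. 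For $\int p_t\divv u$ I substitute $p_t=-\divv(pu)+R\Delta\theta-Rp\divv u+R\mathcal{Q}(\nabla u)+R|\curl b|^2$ from $\eqref{a1}_3$ and bound each resulting piece by H\"older, Sobolev, and Gagliardo--Nirenberg; a small Young absorption yields $\varepsilon_1\bar{\rho}\|\nabla\theta\|_{L^2}^2$, while the dominant coupling generates $C\bar{\rho}^2\|\rho\|_{L^3}^{1/2}\|\sqrt{\rho}\theta\|_{L^2}(\|\nabla\theta\|_{L^2}^2+\||u||\nabla u|\|_{L^2}^2)$. For $\int\partial_t(\curl b\times b)\cdot u$ I expand via Leibniz and substitute $b_t=b\cdot\nabla u-u\cdot\nabla b-b\divv u+\Delta b$ from $\eqref{a1}_4$; each resulting trilinear integral is bounded by Gagliardo--Nirenberg, routing into $\eta\bar{\rho}\int\||b||\nabla b|\|_{L^2}^2$, $C\bar{\rho}\int\|b\|_{L^2}\|\nabla b\|_{L^2}\|b_t\|_{L^2}^2$, and $C\bar{\rho}\int\|\nabla b\|_{L^2}^4\|\nabla u\|_{L^2}^2$. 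After integration over $[0,T]$, controlling $\Phi$ via $|\int p\divv u|\le\eta\|\nabla u\|_{L^2}^2+C\bar{\rho}\|\sqrt{\rho}\theta\|_{L^2}^2$ and $|\int(\curl b\times b)\cdot u|\le\eta\|\nabla u\|_{L^2}^2+C\|b\|_{L^4}^4$ produces the supremum terms $C\bar{\rho}\sup\|\sqrt{\rho}\theta\|_{L^2}^2+C\sup\|b\|_{L^4}^4$ and the initial-datum bound $\Phi(0)\le C\|\nabla u_0\|_{L^2}^2+C\bar{\rho}\|\sqrt{\rho_0}\theta_0\|_{L^2}^2+C\|b_0\|_{L^4}^4$. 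The cubic booster $C\bar{\rho}^3\int\|\nabla u\|_{L^2}^4(\cdots)$ arises when $\||u||\nabla u|\|_{L^2}$ is bounded by Gagliardo--Nirenberg through $\|\nabla^2 u\|_{L^2}$, with the latter controlled by elliptic regularity for $\eqref{a1}_2$ (expressed in terms of $\|\sqrt{\rho}u_t\|_{L^2}$, $\|\nabla p\|_{L^2}$, and $\||b||\nabla b|\|_{L^2}$).

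\textbf{Principal obstacle.} The delicate point is the magnetic commutator: every cubic and quartic monomial in $(u,b,\nabla u,\nabla b)$ generated after integration by parts must be routed cleanly into a quantity already bounded through \eqref{2.2}, \eqref{2.4}, \eqref{2.8}, or \eqref{2.14}, and the $\bar{\rho}$ exponents must match so that the final smallness parameter $N_0$ in \eqref{1.8} remains scaling invariant. In particular, $\|b_t\|_{L^2}^2$ and $\||b||\nabla b|\|_{L^2}^2$ are retained on the RHS with coefficients $C\bar{\rho}\|b\|_{L^2}\|\nabla b\|_{L^2}$ and $\eta\bar{\rho}$ respectively, because their absorption is deferred to the point where \eqref{2.4} is added to the bootstrap chain.
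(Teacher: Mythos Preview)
Your overall framework—test $\eqref{a1}_2$ with $u_t$, work with the effective viscous flux $F$ and vorticity $w$, and supplement with elliptic bounds for $\nabla F,\nabla w$—is the same as the paper's. The gap is in how you treat the pressure. You propose to keep $\int p_t\,\divv u\,dx$ intact, substitute the formula for $p_t$, and ``bound each resulting piece''. But two of those pieces, $\int\divv(pu)\,\divv u\,dx$ and $R\int\Delta\theta\,\divv u\,dx$, after one integration by parts land on $\nabla\divv u$, and since $(2\mu+\lambda)\nabla\divv u=\nabla F+\nabla p+\tfrac12\nabla|b|^2$ you are forced into $\|\nabla p\|_{L^2}=R\|\nabla(\rho\theta)\|_{L^2}$. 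This quantity contains $\|\theta\nabla\rho\|_{L^2}$, which is \emph{not} controlled by anything on the right-hand side of \eqref{2.24} or by the bootstrap quantity $N_T$; the estimate would therefore fail to close. You make the same mistake explicitly when you write that $\|\nabla^2u\|_{L^2}$ is controlled ``in terms of $\|\sqrt{\rho}u_t\|_{L^2}$, $\|\nabla p\|_{L^2}$, and $\||b||\nabla b|\|_{L^2}$''.

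The paper avoids this by first replacing $\divv u$ with $\tfrac{1}{2\mu+\lambda}(F+p+\tfrac12|b|^2)$ \emph{before} substituting $p_t$. Then $\int p_t\,p\,dx=\tfrac12\tfrac{d}{dt}\|p\|_{L^2}^2$ is pure time derivative, and in $\int p_t\,F\,dx$ the transport and Laplacian parts of $p_t$ integrate by parts onto $\nabla F$ rather than $\nabla\divv u$; this is exactly \eqref{2.28}. Correspondingly the energy is recast as $\mu\|w\|_{L^2}^2+\tfrac{1}{2\mu+\lambda}\|F\|_{L^2}^2$ plus cross terms, and the cubic booster $C\bar\rho^3\|\nabla u\|_{L^2}^4\|F\|_{L^2}^2$ emerges from the term $\int|\nabla u|^2|F|\,dx$ (their $Q_3$), not from bounding $\||u||\nabla u|\|_{L^2}$. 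Finally, whenever $\|\nabla u\|_{L^6}$ is needed they use
\[
\|\nabla u\|_{L^6}\le C\big(\|\nabla w\|_{L^2}+\|\nabla F\|_{L^2}+\bar\rho\|\nabla\theta\|_{L^2}+\||b||\nabla b|\|_{L^2}\big),
\]
which requires only $\|p\|_{L^6}\le C\bar\rho\|\nabla\theta\|_{L^2}$, never $\|\nabla p\|_{L^2}$. Your argument becomes correct once you route the pressure through $F$ in this way; without it the chain does not close.
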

\begin{proof}[Proof]
Multiplying $\eqref{a1}_2$ by $u_t$ and integration by parts, we get that
\begin{align}\label{2.25}
&\frac12\frac{d}{dt}(\mu\|\nabla u\|_{L^2}^2+(\mu+\lambda)\|\divv u\|_{L^2}^2)-\int p\divv u_tdx
+\|\sqrt{\rho}u_t\|_{L^2}^2\nonumber\\
&=-\int\rho u\cdot\nabla u\cdot u_tdx-\int b\cdot\nabla b\cdot u_tdx-\frac12\int\nabla|b|^2\cdot u_tdx.
\end{align}
By the definition of effective viscous flux $F$, we use $\divv u=\frac{F+p+\frac12|b|^2}{2\mu+\lambda}$ to obtain that
\begin{align}
\int p\divv u_tdx&=\frac{d}{dt}\int p\divv udx-\int p_t\divv udx\nonumber\\
&=\frac{d}{dt}\int p{\rm div}udx-\frac{1}{2(2\mu+\lambda)}\frac{d}{dt}\|p\|_{L^2}^2-\frac{1}{2\mu+\lambda}\int p_tFdx-\frac{1}{2(2\mu+\lambda)}\int p_t|b|^2dx\nonumber\\
&=\frac{1}{2(2\mu+\lambda)}\frac{d}{dt}\|p\|_{L^2}^2+\frac{1}{2(2\mu+\lambda)}\frac{d}{dt}\int p|b|^2dx
+\frac{1}{2\mu+\lambda}\frac{d}{dt}\int pFdx\nonumber\\
&\quad-\frac{1}{2\mu+\lambda}\int p_t\Big(F+\frac12|b|^2\Big)dx.
\end{align}
It follows $\eqref{a1}_1$, $\eqref{a1}_3$, and the state equation that
\begin{align}
p_t&=-\divv (pu)-(\gamma-1)\big(p\divv u-\Delta\theta-\mathcal{Q}(\nabla u)
-|\curl b|^2\big)
\end{align}
with $\gamma-1=\frac{R}{c_v}$,
which leads to
\begin{align}\label{2.28}
&\int p_t\Big(F+\frac12|b|^2\Big)dx \notag \\
& =\int\big[(\gamma-1)\big(\mathcal{Q}(\nabla u)-p\divv u+|\curl b|^2\big)F
+(up-(\gamma-1)\nabla\theta)\cdot\nabla F\big]dx\nonumber\\
& \quad+\frac12\int\big[(\gamma-1)\big(\mathcal{Q}(\nabla u)-p\divv u+|\curl b|^2\big)|b|^2
+(up-(\gamma-1)\nabla\theta)\cdot\nabla |b|^2\big]dx.
\end{align}
In view of $\|\nabla u\|_{L^2}^2=\|w\|_{L^2}^2+\|\divv u\|_{L^2}^2$,
then one obtains from \eqref{2.25}--\eqref{2.28} that
\begin{align}\label{2.29}
&\frac12\frac{d}{dt}\Big(\mu\|w\|_{L^2}^2
+\frac{\|F\|_{L^2}^2}{2\mu+\lambda}+\frac{1}{2(2\mu+\lambda)}\int|b|^2Fdx+\frac{1}{2(2\mu+\lambda)}\int|b|^4dx\Big)+\|\sqrt{\rho}u_t\|_{L^2}^2\nonumber\\
&=-\int\rho u\cdot\nabla u\cdot u_tdx
+\int b\cdot\nabla b\cdot u_tdx-\frac12\int\nabla|b|^2\cdot u_tdx\nonumber\\
&\quad-\frac{\gamma-1}{2\mu+\lambda}\int\Big[\mathcal{Q}(\nabla u)
-p\divv u+|\curl b|^2\Big]\Big(F+\frac12|b|^2\Big)dx\nonumber\\
&\quad+\frac{1}{2\mu+\lambda}\int((\gamma-1)\nabla\theta-up)\cdot\nabla\Big(F+\frac12|b|^2\Big)dx.
\end{align}
Using $\Delta u=\nabla\divv u-\curl w$ to rewrite $\eqref{a1}_2$ as follows
\begin{align}\label{2.30}
\rho u_t+\rho u\cdot\nabla u=\nabla F-\mu\curl w+b\cdot\nabla b.
\end{align}
Multiplying \eqref{2.30} by $\nabla F$ and using $\int\nabla F\cdot\curl wdx=0$ and \eqref{2.23}, we have
\begin{align*}
\|\nabla F\|_{L^2}^2
&=\int[\rho(u_t+u\cdot\nabla u)-b\cdot\nabla b)\cdot\nabla F]dx\nonumber\\
&\le \int\Big(\frac{|\nabla F|^2}{2}+2\bar{\rho}\rho|u_t|^2\Big)dx
+\int\rho u\cdot\nabla u\cdot\nabla Fdx-\int b\cdot\nabla b\cdot\nabla Fdx,
\end{align*}
which yields that
\begin{align}\label{2.31}
\frac{\|\nabla F\|_{L^2}^2}{16\bar{\rho}}&\le
\frac{1}{4}\|\sqrt{\rho}u_t\|_{L^2}^2
+\frac{1}{8\bar{\rho}}\int\rho u\cdot\nabla u\cdot\nabla Fdx
-\frac{1}{8\bar{\rho}}\int b\cdot\nabla b\cdot\nabla Fdx.
\end{align}
Similarly, one deduces that
\begin{align}\label{2.32}
\frac{\mu\|\nabla w\|_{L^2}^2}{16\bar{\rho}}
&\le
\frac{1}{4}\|\sqrt{\rho}u_t\|_{L^2}^2
+\frac{1}{8\bar{\rho}}\int\rho u\cdot\nabla u\cdot\curl wdx
-\frac{1}{8\bar{\rho}}\int b\cdot\nabla b\cdot\curl wdx.
\end{align}
Putting \eqref{2.31} and \eqref{2.32} into \eqref{2.29} gives that
\begin{align}\label{2.33}
&\frac12\frac{d}{dt}\Big(\mu\|w\|_{L^2}^2
+\frac{\|F\|_{L^2}^2}{2\mu+\lambda}+\frac{1}{2\mu+\lambda}\int|b|^2Fdx
+\frac{1}{2\mu+\lambda}\int|b|^4dx\Big)\nonumber\\
&\quad+\frac12\|\sqrt{\rho}u_t\|_{L^2}^2+\frac{1}{16\bar{\rho}}(\|\nabla F\|_{L^2}^2
+\mu^2\|\nabla w\|_{L^2}^2)\nonumber\\
&\le C\int\rho|u||\nabla u|\Big[|u_t|+\frac{1}{\bar{\rho}}(|\nabla F|+|\nabla w|)\Big]dx
+C\int(|\nabla\theta|+\rho\theta|u|)(|\nabla F|+|b||\nabla b|)dx\nonumber\\
&\quad+C\int(|\nabla u|^2+\rho\theta|\nabla u|)(|F|+\frac12|b|^2)+\frac{C}{\bar{\rho}}\int|b||\nabla b|(|\nabla F|+|\nabla w|)dx\nonumber\\
&\quad+C\int|\nabla b|^2Fdx+\int b\cdot\nabla b\cdot u_tdx-\frac12\int\nabla|b|^2\cdot u_tdx
\triangleq\sum_{i=1}^7Q_i.
\end{align}
By \eqref{2.23}, H{\"o}lder's inequality, and Young's inequality, we get that
\begin{align*}
Q_1 & \le C\sqrt{\bar{\rho}}\||u||\nabla u|\|_{L^2}\|\sqrt{\rho}u_t\|_{L^2}
+C\||u||\nabla u|\|_{L^2}(\|\nabla F\|_{L^2}+\|\nabla w\|_{L^2})\nonumber\\
& \le \frac14\|\sqrt{\rho}u_t\|_{L^2}^2+\frac{1}{224\bar{\rho}}(\|\nabla F\|_{L^2}^2+\mu^2\|\nabla w\|_{L^2}^2)
+C\bar{\rho}\||u||\nabla u|\|_{L^2}^2,\\
Q_2 & \le C\|\rho\theta u\|_{L^2}
(\|\nabla F\|_{L^2}+\||b||\nabla b|\|_{L^2})
+C\|\nabla\theta\|_{L^2}(\|\nabla F\|_{L^2}+\||b||\nabla b|\|_{L^2})\nonumber\\
&\le C\sqrt{\bar{\rho}}\|\rho\|_{L^3}^\frac14\|\sqrt{\rho}\theta\|_{L^2}^\frac12
\|\nabla\theta\|_{L^2}^\frac12\||u||\nabla u|\|_{L^2}^\frac12(\|\nabla F\|_{L^2}+\||b||\nabla b|\|_{L^2})\nonumber\\
&\quad+C\|\nabla\theta\|_{L^2}(\|\nabla F\|_{L^2}+\||b||\nabla b|\|_{L^2}^2)\nonumber\\
&\le \frac{1}{224\bar{\rho}}\|\nabla F\|_{L^2}^2+\varepsilon_1\bar{\rho}\||b||\nabla b|\|_{L^2}^2+C(\bar{\rho}^2\|\rho\|_{L^3}^\frac12\|\sqrt{\rho}\theta\|_{L^2}+\bar{\rho})
(\|\nabla\theta\|_{L^2}^2+\||u||\nabla u|\|_{L^2}^2).
\end{align*}
Noticing that
\begin{align}\label{3.30}
\|\nabla u\|_{L^6}
&\le C(\|w\|_{L^6}+\|\divv u\|_{L^6})\nonumber\\
&\le C\big(\|w\|_{L^6}+\|F\|_{L^6}+\|\rho\theta\|_{L^6}
+\||b|^2\|_{L^6}\big)\nonumber\\
&\le C\big(\|\nabla w\|_{L^2}+\|\nabla F\|_{L^2}+\bar{\rho}\|\nabla\theta\|_{L^2}+\||b||\nabla b|\|_{L^2}\big).
\end{align}
Then it follows from H\"older's, Young's, and Gagliardo-Nirenberg inequalities that
\begin{align*}
Q_3&\le C\|\nabla u\|_{L^2}\|\nabla u\|_{L^6}(\|F\|_{L^3}+\|b\|_{L^6}^2)
+C\|\nabla u\|_{L^2}\|\rho\theta\|_{L^6}(\|F\|_{L^3}+\|b\|_{L^6}^2)\nonumber\\
&\le C\|\nabla u\|_{L^2}(\|\nabla w\|_{L^2}+\|\nabla F\|_{L^2}+\bar{\rho}\|\nabla\theta\|_{L^2}+\||b||\nabla b|\|_{L^2})
\Big(\|F\|_{L^2}^\frac12\|\nabla F\|_{L^2}^\frac12
+\|\nabla b\|_{L^2}^2\Big)\nonumber\\
&\quad+C\bar{\rho}\|\nabla u\|_{L^2}\|\nabla\theta\|_{L^2}\Big(\|F\|_{L^2}^\frac12\|\nabla F\|_{L^2}^\frac12
+\|\nabla b\|_{L^2}^2\Big)\nonumber\\
&\le \frac{1}{224\bar{\rho}}(\|\nabla F\|_{L^2}^2+\mu^2\|\nabla w\|_{L^2}^2)
+C\bar{\rho}^3\|\nabla u\|_{L^2}^4\|F\|_{L^2}^2+C\bar{\rho}\|\nabla\theta\|_{L^2}^2\nonumber\\
&\quad+C\bar{\rho}\|\nabla b\|_{L^2}^4\|\nabla u\|_{L^2}^2+\varepsilon_1\||b||\nabla b|\|_{L^2}^2.
\end{align*}
By virtue of H\"older's, Young's, and Gagliardo-Nirenberg inequalities, one obtains that
\begin{align*}
Q_4&\le C{\bar{\rho}}\|b\|_{L^3}\|\nabla b\|_{L^6}(\|\nabla F\|_{L^2}+\|\nabla w\|_{L^2})\nonumber\\
&\le \frac{1}{224\bar{\rho}}(\|\nabla F\|_{L^2}^2+\mu^2\|\nabla w\|_{L^2}^2)+C\|b\|_{L^2}\|\nabla b\|_{L^2}\|\nabla^2b\|_{L^2}^2,\\
Q_5&\le C\int(|b||\nabla^2b||F|+|b||\nabla b||\nabla F|)dx
\le C\|b\|_{L^3}\|\nabla^2b\|_{L^2}\|F\|_{L^6}\nonumber\\
&\le \frac{1}{224\bar{\rho}}\|\nabla F\|_{L^2}^2+C\bar{\rho}\|b\|_{L^2}\|\nabla b\|_{L^2}\|\nabla^2b\|_{L^2}^2.
\end{align*}
Using H\"older's inequality and \eqref{3.30}, we arrive at
\begin{align*}
Q_6&=-\frac{d}{dt}\int b\cdot\nabla u\cdot bdx+\int b_t\cdot\nabla u\cdot bdx+\int b\cdot\nabla u\cdot b_tdx\nonumber\\
&\le -\frac{d}{dt}\int b\cdot\nabla u\cdot bdx+C\|b\|_{L^3}\|b_t\|_{L^2}\|\nabla u\|_{L^6}\nonumber\\
&\le -\frac{d}{dt}\int b\cdot\nabla u\cdot bdx+C\bar{\rho}\|b\|_{L^2}\|\nabla b\|_{L^2}\|b_t\|_{L^2}^2
+\frac{\varepsilon_1}{4\bar{\rho}}\|\nabla u\|_{L^6}^2\nonumber\\
&\le  -\frac{d}{dt}\int b\cdot\nabla u\cdot bdx+\frac{1}{224\bar{\rho}}(\|\nabla F\|_{L^2}^2+\mu^2\|\nabla w\|_{L^2}^2)
+\frac{\varepsilon_1\bar{\rho}}{4}\|\nabla\theta\|_{L^2}^2\nonumber\\
&\quad+\frac{\varepsilon_1 C}{2\bar{\rho}}\||b||\nabla b|\|_{L^2}^2+C\bar{\rho}\|b\|_{L^2}\|\nabla b\|_{L^2}\|b_t\|_{L^2}^2.
\end{align*}
Similarly, we get that
\begin{align*}
Q_7&\le -\frac{1}{2}\frac{d}{dt}\int|b|^2\divv udx
+\frac{1}{224\bar{\rho}}\big(\|\nabla F\|_{L^2}^2+\mu^2\|\nabla w\|_{L^2}^2\big)
+\frac{\varepsilon_1\bar{\rho}}{4}\|\nabla\theta\|_{L^2}^2\nonumber\\
&\quad+\frac{\varepsilon_1 C}{2\bar{\rho}}\||b||\nabla b|\|_{L^2}^2
+C\bar{\rho}\|b\|_{L^2}\|\nabla b\|_{L^2}\|b_t\|_{L^2}^2.
\end{align*}
Substituting all those estimates for $Q_i\ (i=1, 2,\cdots,7)$ into \eqref{2.33} leads to
\begin{align*}
&\frac{d}{dt}\Big(\mu\|w\|_{L^2}^2
+\frac{\|F\|_{L^2}^2}{2\mu+\lambda}+\frac{1}{2\mu+\lambda}\int|b|^2Fdx
+\frac{1}{2\mu+\lambda}\int|b|^4dx\Big)\nonumber\\
&\quad+\frac12\|\sqrt{\rho}u_t\|_{L^2}^2+\frac{1}{16\bar{\rho}}(\|\nabla F\|_{L^2}^2
+\mu^2\|\nabla w\|_{L^2}^2)\nonumber\\
&\le -\frac{d}{dt}\int \big(b\cdot\nabla u\cdot b+|b|^2\divv u\big)dx+\varepsilon_1\bar{\rho}\||b||\nabla b|\|_{L^2}^2
+C\bar{\rho}^3\|\nabla u\|_{L^2}^4\|F\|_{L^2}^2
\nonumber\\
&\quad+C\big(\bar{\rho}^2\|\rho\|_{L^3}^\frac12\|\sqrt{\rho}\theta\|_{L^2}
+\bar{\rho}\big)
\big(\|\nabla\theta\|_{L^2}^2+\||u||\nabla u|\|_{L^2}^2\big)+C\bar{\rho}\|\nabla b\|_{L^2}^4\|\nabla u\|_{L^2}^2\nonumber\\
&\quad+C\bar{\rho}\|b\|_{L^2}\|\nabla b\|_{L^2}\|b_t\|_{L^2}^2+\varepsilon_1\bar{\rho}\|\nabla\theta\|_{L^2}^2,
\end{align*}
which integrating in $t$ over $[0,T]$, together with the following facts
\begin{align*}
&\|\nabla u\|_{L^2}\le C\big(\|w\|_{L^2}+\|F\|_{L^2}+\|\rho\theta\|_{L^2}+\|b\|_{L^4}^2\big)
\le C\big(\|w\|_{L^2}+\|F\|_{L^2}+\sqrt{\bar{\rho}}\|\sqrt{\rho}\theta\|_{L^2}
+\|b\|_{L^4}^2\big),\\
&\Big|-\int \big(b\cdot\nabla u\cdot b+|b|^2\divv u\big)dx\Big|\le
C\|\nabla u\|_{L^2}\||b|^2\|_{L^2}\le \varepsilon_2\|\nabla u\|_{L^2}^2+C\|b\|_{L^4}^4,\\
& \int|b|^2Fdx\le \varepsilon_3\|F\|_{L^2}^2+C\|b\|_{L^4}^4,
\end{align*}
with sufficiently small constants $\varepsilon_2$ and $\varepsilon_3$, yields \eqref{2.24}.
\end{proof}

\begin{lemma}\label{l27}
Let \eqref{2.23} be satisfied, then it holds that
\begin{align*}
&\sup_{0\le t\le T}\|\rho\|_{L^\infty}\nonumber\\
&\le
\|\rho_0\|_{L^\infty}e^{C\bar{\rho}^\frac23
\sup\limits_{0\le t\le T}
\big(\|\sqrt{\rho}u\|_{L^2}^\frac13\|\sqrt{\rho}|u|^2\|_{L^2}^\frac13\big)
+C\bar{\rho}\int_0^T\|\nabla u\|_{L^2}\|(\nabla F, \nabla w, \bar{\rho}\nabla\theta)\|_{L^2}dt
+C\big(\int_0^T\|\nabla b\|_{L^2}^2dt\big)^\frac12\big(\int_0^T\|\nabla^2b\|_{L^2}^2dt\big)^\frac12}.
\end{align*}
\end{lemma}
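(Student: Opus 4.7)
The plan is to track the evolution of $\ln\rho$ along the particle trajectory $X(t;x_0)$ generated by $u$. The continuity equation $\eqref{a1}_1$ gives $D_t\ln\rho = -\mathrm{div}\,u$, where $D_t=\partial_t+u\cdot\nabla$. Substituting the effective viscous flux relation $(2\mu+\lambda)\mathrm{div}\,u = F + p + \tfrac12|b|^2$ and using $p,|b|^2/2\ge 0$ yields
\[
\ln\rho(X(t),t)\le \ln\rho_0(X(0)) - \frac{1}{2\mu+\lambda}\int_0^t F(X(s),s)\,ds.
\]
Thus the whole task reduces to a sharp upper bound for $-\int_0^t F(X(s),s)\,ds$.

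For this, I would start from the representation already used in the proof of \eqref{2.14},
\[
F = \Delta^{-1}\mathrm{div}(\rho u)_t + \Delta^{-1}\mathrm{div}\,\mathrm{div}(\rho u\otimes u - b\otimes b),
\]
obtained by applying $\Delta^{-1}\mathrm{div}$ to $\eqref{a1}_2$. The material-derivative identity $\phi_t(X(s),s) = D_s[\phi(X(s),s)] - u\cdot\nabla\phi(X(s),s)$ with $\phi = \Delta^{-1}\mathrm{div}(\rho u)$ converts the time derivative along the trajectory into endpoint contributions at $s=0$ and $s=t$, plus time integrals of three spatial pieces that must be bounded in $L^\infty_x$: $u\cdot\nabla\Delta^{-1}\mathrm{div}(\rho u)$, $\Delta^{-1}\mathrm{div}\,\mathrm{div}(\rho u\otimes u)$, and $\Delta^{-1}\mathrm{div}\,\mathrm{div}(b\otimes b)$.

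Each such contribution matches exactly one exponent term. The endpoint contributions are controlled by $2\sup_t\|\Delta^{-1}\mathrm{div}(\rho u)\|_{L^\infty}$, and the bound
\[
\|\Delta^{-1}\mathrm{div}(\rho u)\|_{L^\infty}\le C\|\rho\|_{L^\infty}^{2/3}\|\sqrt\rho u\|_{L^2}^{1/3}\|\sqrt\rho|u|^2\|_{L^2}^{1/3}
\]
(established inside the proof of \eqref{2.14}), together with the a priori hypothesis $\|\rho\|_{L^\infty}\le 4\bar\rho$, produces the first exponent term. The magnetic piece $\Delta^{-1}\mathrm{div}\,\mathrm{div}(b\otimes b)$ is, after a Gagliardo--Nirenberg split using the Riesz transform $L^p$-boundedness, controlled in $L^\infty$ by $\|b\|_{L^\infty}^2$, and the interpolation $\|b\|_{L^\infty}^2\le C\|\nabla b\|_{L^2}\|\nabla^2 b\|_{L^2}$ (a consequence of \eqref{2.6}) followed by Cauchy--Schwarz in time yields the third exponent term. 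The remaining combination $u\cdot\nabla\Delta^{-1}\mathrm{div}(\rho u) - \Delta^{-1}\mathrm{div}\,\mathrm{div}(\rho u\otimes u)$ is estimated in $L^\infty$ using Gagliardo--Nirenberg interpolation together with the Helmholtz/effective-flux identities $\|\nabla u\|_{L^6}\le C(\|\nabla w\|_{L^2}+\|\nabla\mathrm{div}\,u\|_{L^2})$ and $(2\mu+\lambda)\nabla\mathrm{div}\,u = \nabla F + R\nabla(\rho\theta) + \tfrac12\nabla|b|^2$; after H\"older in time this reduces to the second exponent term $C\bar\rho\int_0^T\|\nabla u\|_{L^2}\|(\nabla F,\nabla w,\bar\rho\nabla\theta)\|_{L^2}\,dt$ (modulo lower-order magnetic contributions absorbed into the third term).

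The main obstacle is the last of these three. The individual summands $u\cdot\nabla\Delta^{-1}\mathrm{div}(\rho u)$ and $\Delta^{-1}\mathrm{div}\,\mathrm{div}(\rho u\otimes u)$ cannot be separately controlled in $L^\infty$ by the target quantities --- the commutator-type cancellation between them is essential. Exploiting this cancellation while simultaneously re-expressing the resulting $\nabla u$-type gradients in terms of $\nabla F,\nabla w,\bar\rho\nabla\theta$ via the Helmholtz-plus-effective-flux identity is the technical heart of the lemma, and is the only place where the particular product structure of the second exponent term truly emerges.
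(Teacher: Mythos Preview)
Your approach is essentially the same as the paper's: the paper also rewrites \eqref{2.15} so that $\Delta^{-1}\divv(\rho u)_t+u\cdot\nabla\Delta^{-1}\divv(\rho u)$ appears as a material derivative, cites \cite[Proposition~2.6]{L20} for the Lagrangian tracking and the commutator estimate producing the first two exponent terms, and handles the new magnetic contribution via $\int_0^T\|b\|_{L^\infty}^2\,dt$ followed by the same Gagliardo--Nirenberg/Cauchy--Schwarz step you describe.

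One technical point to tighten: the claim that $\|\Delta^{-1}\divv\divv(b\otimes b)\|_{L^\infty}\le C\|b\|_{L^\infty}^2$ does not follow from a Gagliardo--Nirenberg split plus Riesz $L^p$-boundedness, since any such interpolation necessarily brings in derivatives of $b\otimes b$ and cannot terminate at $\|b\|_{L^\infty}^2$ alone. The clean way through is the identity (using $\divv b=0$)
\[
\Delta^{-1}\divv\divv(b\otimes b)=\tfrac12|b|^2+\Delta^{-1}\divv(\curl b\times b),
\]
so that the first piece is trivially $\le \tfrac12\|b\|_{L^\infty}^2$ and the second, being an order $-1$ operator applied to $\curl b\times b$, interpolates directly (e.g.\ via $\|h\|_{L^\infty}\le C\|h\|_{L^6}^{1/2}\|\nabla h\|_{L^6}^{1/2}$ and Riesz boundedness on $L^2,L^6$) to $C\|\nabla b\|_{L^2}\|\nabla^2 b\|_{L^2}$. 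Either way you land on the third exponent term, so the overall plan is sound.
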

\begin{proof}[Proof]
According to \eqref{2.15}, one obtains that
\begin{align*}
&\Delta^{-1}\divv(\rho u)_t+u\cdot\nabla\Delta^{-1}\divv (\rho u)-(2\mu+\lambda)\divv u+p+\frac{|b|^2}{2}\Delta^{-1}\divv\divv  (b\otimes b)\nonumber\\
&=u\cdot\nabla\Delta^{-1}\divv (\rho u)-\Delta^{-1}\divv \divv (\rho u\otimes u).
\end{align*}
Similarly to \cite[Proposition 2.6]{L20}, we can deduce that
\begin{align}\label{2.42}
\sup_{0\le t\le T}\|\rho\|_{L^\infty}\le\|\rho_0\|_{L^\infty}e^{C\bar{\rho}^\frac23
\sup\limits_{0\le t\le T}
\big(\|\sqrt{\rho}u\|_{L^2}^\frac13\|\sqrt{\rho}|u|^2\|_{L^2}^\frac13\big)
+C\bar{\rho}\int_0^T\|\nabla u\|_{L^2}\|(\nabla F, \nabla w, \bar{\rho}\nabla\theta)\|_{L^2}dt
+C\int_0^T\|b\|_{L^\infty}^2dt}.
\end{align}
Employing Gagliardo-Nirenberg and H\"older's inequalities, one gets that
\begin{align*}
\int_0^T\|b\|_{L^\infty}^2dt\le C\int_0^T\|\nabla b\|_{L^2}\|\nabla^2b\|_{L^2}dt\le C\Big(\int_0^T\|\nabla b\|_{L^2}^2dt\Big)^\frac12\Big(\int_0^T\|\nabla^2b\|_{L^2}^2dt\Big)^\frac12.
\end{align*}
This together with \eqref{2.42} implies the conclusion immediately.
\end{proof}

\begin{lemma}\label{l28}
Let
\begin{align*}
N_T\triangleq\bar{\rho}\big[\|\rho\|_{L^3}+\bar{\rho}^2(\|\sqrt{\rho}u\|_{L^2}^2
+\|b\|_{L^2}^2)\big]\big[\|\nabla u\|_{L^2}^2+
\bar{\rho}(\|\sqrt{\rho}E\|_{L^2}^2+\|\nabla b\|_{L^2}^2)\big].
\end{align*}
There exists a positive constant $\eta_0$ depending only on $R$, $\mu$, and $\lambda$ such that if
\begin{align}\label{2.44}
\sup_{0\le t\le T}\|\rho\|_{L^\infty}\le 4\bar{\rho}, \ N_T\le \sqrt{\eta},\ \eta\le\eta_0,
\end{align}
then it holds that
\begin{align}
&\sup_{0\le t\le T}\|\rho\|_{L^3}+\Big(\int_0^T\int\rho^3pdxdt\Big)^\frac13\le C\big[\|\rho_0\|_{L^3}
+\bar{\rho}^2(\|\sqrt{\rho_0}u_0\|_{L^2}^2+\|b_0\|_{L^2}^2)\big],\label{2.45}\\
&\bar{\rho}^2\Big(\sup_{0\le t\le T}(\|\sqrt{\rho}u\|_{L^2}^2+\|b\|_{L^2}^2)
+\int_0^T\|(\nabla u, \nabla b)\|_{L^2}^2dt\Big)\nonumber\\
&\le C(\|\rho_0\|_{L^3}
+\bar{\rho}^2(\|\sqrt{\rho_0}u_0\|_{L^2}^2+\|b_0\|_{L^2}^2)),\label{2.46}\\[3pt]
&\sup_{0\le t\le T}(\bar{\rho}(\|\nabla b\|_{L^2}^2+\|b\|_{L^4}^4+\|\sqrt{\rho}E\|_{L^2}^2)+\|\nabla u\|_{L^2}^2)\nonumber\\
&\quad +\bar{\rho}\int_0^T\Big(\|(b_t, \nabla^2b, |b||\nabla b|, \nabla\theta, |u||\nabla u|)\|_{L^2}^2
+\Big\|\Big(\sqrt{\rho}u_t, \frac{\nabla F}{\sqrt{\bar{\rho}}}, \frac{\nabla w}{\sqrt{\bar{\rho}}}\Big)\Big\|_{L^2}^2\Big)dt\nonumber\\
&\le C(\bar{\rho}\|\nabla b_0\|_{L^2}^2+\|\sqrt{\rho_0}E_0\|_{L^2}^2+\|\nabla u_0\|_{L^2}^2),\label{2.47}\\
&\sup_{0\le t\le T}\|\rho\|_{L^\infty}\le \bar{\rho}e^{CN_0^\frac16+CN_0^\frac12}. \label{2.48}
\end{align}
\end{lemma}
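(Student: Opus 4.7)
The proof is a bootstrap that chains together the single-step estimates already derived: Lemma \ref{l31} for the basic energy, inequality \eqref{2.4} for $\nabla b$ and $\|b\|_{L^4}$, inequality \eqref{2.8} for $\sqrt{\rho}E$, inequality \eqref{2.14} for $\|\rho\|_{L^3}$, Lemma \ref{lem} for $\nabla u$, and Lemma \ref{l27} for $\|\rho\|_{L^\infty}$. On the right-hand sides of \eqref{2.4}, \eqref{2.8}, \eqref{2.14} and \eqref{2.24} all the genuinely nonlinear terms are quantities of the type $N_T$ (or powers thereof) multiplied by a dissipation integral; under the \emph{a priori} hypothesis \eqref{2.44} the factor in front of the dissipation is $\le C\sqrt{\eta}$ and can be absorbed into the left-hand side provided $\eta_0$ is chosen small enough depending only on $R,\mu,\lambda,\nu,\kappa,c_v$.

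\textbf{Proof of \eqref{2.45} and \eqref{2.46}.} Multiply \eqref{2.2} by $\bar\rho^2$ and add it to \eqref{2.14}. On the right-hand side of \eqref{2.14} one estimates $\|\rho\|_{L^\infty}^{2/3}\le C\bar\rho^{2/3}$, bounds $\|\sqrt\rho u\|_{L^2}$ and $\|\sqrt\rho|u|^2\|_{L^2}$ by $\|\sqrt\rho E\|_{L^2}$ and $\|\rho^{1/4}u\|_{L^4}^2$ respectively, and recognises the products $\bar\rho^2\|\sqrt\rho u\|_{L^2}^2$, $\bar\rho\|\rho\|_{L^3}^2\|\nabla u\|_{L^2}^2$ and $\bar\rho\|\rho\|_{L^3}^2\|\nabla b\|_{L^2}^2$ as $N_T$-controlled multiples of the dissipation $\int(\|\nabla u\|_{L^2}^2+\|\nabla b\|_{L^2}^2)\,dt$ appearing on the left. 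Since $N_T\le\sqrt{\eta}\le\sqrt{\eta_0}$, these terms can be absorbed for $\eta_0$ small. The remaining integral $\int\|\rho\|_{L^3}^2\|\nabla\theta\|_{L^2}^2\,dt$ from \eqref{2.2} is absorbed using the as-yet-unknown $\int\|\nabla\theta\|_{L^2}^2\,dt$ bound from \eqref{2.8}; an analogous $N_T$-smallness argument closes it. What remains is $\|\rho_0\|_{L^3}^3+\bar\rho^2(\|\sqrt{\rho_0}u_0\|_{L^2}^2+\|b_0\|_{L^2}^2)$ on the right, giving \eqref{2.45} and \eqref{2.46} after taking cube roots and using $\int\rho^3 p\,dx=R\int\rho^4\theta\,dx\ge 0$.

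\textbf{Proof of \eqref{2.47}.} Form the linear combination $(\text{\eqref{2.24}})+\bar\rho\cdot(\text{\eqref{2.8}})+\bar\rho\cdot(\text{\eqref{2.4}})$. The three sup-type prefactors $\sup(\|b\|_{L^2}^2\|\nabla b\|_{L^2}^2)^{3/2}$, $\sup(\|b\|_{L^2}\|\nabla b\|_{L^2})$, $\sup(\|b\|_{L^2}^2\|\nabla u\|_{L^2}^2)$, $\sup\|\nabla b\|_{L^2}^4$ appearing in \eqref{2.4}, \eqref{2.8}, \eqref{2.24} are each $\le CN_T/\bar\rho^k$ for appropriate small powers $k$, so they are $\le C\sqrt{\eta}$ and the corresponding dissipation integrals $\int\|\nabla^2 b\|_{L^2}^2$, $\int\|\nabla u\|_{L^2}^6$ can be absorbed (for the last one, note $\int_0^T\|\nabla u\|_{L^2}^6\,dt\le \sup\|\nabla u\|_{L^2}^4\int_0^T\|\nabla u\|_{L^2}^2\,dt$, which is controlled by \eqref{2.46} and the sup term to be closed). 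The mixed term $\|\sqrt\rho\theta\|_{L^2}\|\nabla\theta\|_{L^2}\||u||\nabla u|\|_{L^2}\|\rho\|_{L^3}^{1/2}$ is split by Young's inequality into a piece absorbable by the $\theta$ and $u$ dissipations times $\bar\rho^2\|\rho\|_{L^3}^{1/2}\|\sqrt\rho\theta\|_{L^2}$, which is again $N_T$-small. Choose $\varepsilon_1$ so small that the terms it carries are absorbed by the left-hand side, and apply Gronwall in the form $y'\le a(t)y+b(t)$ with $\int a\,dt\le CN_T$ small to control the quartic-in-$\nabla u$ term; what survives is the initial-data combination $\bar\rho\|\nabla b_0\|_{L^2}^2+\|\sqrt{\rho_0}E_0\|_{L^2}^2+\|\nabla u_0\|_{L^2}^2$.

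\textbf{Proof of \eqref{2.48} and the main obstacle.} With \eqref{2.45}--\eqref{2.47} in hand, apply Lemma \ref{l27}. For the exponent: the first summand $\bar\rho^{2/3}\sup(\|\sqrt\rho u\|_{L^2}^{1/3}\|\sqrt\rho|u|^2\|_{L^2}^{1/3})$ is bounded by interpolating $\|\sqrt\rho|u|^2\|_{L^2}\le C\bar\rho^{1/2}\|\sqrt\rho u\|_{L^2}^{1/2}\|\nabla u\|_{L^2}^{3/2}$ and is then $\le CN_0^{1/6}$ after using \eqref{2.46}--\eqref{2.47}. The second summand $\bar\rho\int\|\nabla u\|_{L^2}\|(\nabla F,\nabla w,\bar\rho\nabla\theta)\|_{L^2}\,dt$ is bounded by Cauchy--Schwarz and \eqref{2.47} by $CN_0^{1/2}$. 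The third summand $\bigl(\int\|\nabla b\|_{L^2}^2\bigr)^{1/2}\bigl(\int\|\nabla^2 b\|_{L^2}^2\bigr)^{1/2}$ is likewise $\le CN_0^{1/2}$. Substituting into \eqref{2.42} gives \eqref{2.48}. The main obstacle is the circularity in Step 2: the estimates for $\nabla u$, $\sqrt\rho E$ and $\nabla b$ are coupled through the magnetic nonlinearities and through the effective viscous flux $F$, so the absorption must be done simultaneously in one combined functional, and the bookkeeping of the precise powers of $\bar\rho$ at each step is what makes the scaling-invariant bound \eqref{2.48} work.
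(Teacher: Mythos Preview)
Your overall strategy---chain the single-step estimates \eqref{2.2}, \eqref{2.4}, \eqref{2.8}, \eqref{2.14}, \eqref{2.24}, \eqref{2.42} and absorb all nonlinear remainder terms using the smallness of $N_T$---is exactly the paper's approach, and your treatment of \eqref{2.47} and \eqref{2.48} is essentially correct.

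There is, however, a genuine ordering gap in your argument for \eqref{2.45}--\eqref{2.46}. You claim that the terms $\int_0^T\bar\rho^2\|\rho\|_{L^3}^2\|\nabla u\|_{L^2}^2\,dt$ and $\int_0^T\bar\rho\|\rho\|_{L^3}^2\|\nabla b\|_{L^2}^2\,dt$ from \eqref{2.14} can be absorbed as ``$N_T$-controlled multiples of the dissipation''. But the hypothesis $N_T\le\sqrt\eta$ does \emph{not} make $\|\rho\|_{L^3}$ itself small; only products such as $\bar\rho\|\rho\|_{L^3}\|\nabla u\|_{L^2}^2$ are small. So you cannot directly absorb $\sup_t\|\rho\|_{L^3}^2\cdot\bar\rho^2\int\|\nabla u\|_{L^2}^2\,dt$ into the left-hand side. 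You also invoke the ``as-yet-unknown'' bound on $\int\|\nabla\theta\|_{L^2}^2\,dt$ from \eqref{2.8}, but \eqref{2.8} in turn requires control of $\int\|\nabla^2 b\|_{L^2}^2\,dt$ and $\int\|\nabla u\|_{L^2}^6\,dt$, so at this stage the loop is genuinely open.

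The paper resolves this by reversing the order: it establishes \eqref{2.47} \emph{first} (Steps 1--3 of the paper's proof), which is self-contained under \eqref{2.44} because every nonlinear prefactor there---$\bar\rho\sup\|\nabla u\|_{L^2}^2\sup\|\rho\|_{L^3}^2$, $\bar\rho^3\sup(\|\sqrt\rho u\|_{L^2}^2+\|b\|_{L^2}^2)\sup\|\nabla u\|_{L^2}^2$, etc.---is a genuine product bounded by a power of $N_T$. Only after \eqref{2.47} is in hand does the paper return to \eqref{2.2} and \eqref{2.14}: \eqref{2.47} supplies $\bar\rho\int\|\nabla\theta\|_{L^2}^2\,dt\le C B_0$ (with $B_0$ the second bracket of $N_0$), whence \eqref{2.2} gives $\int\|\nabla u\|_{L^2}^2\,dt$ with a remainder $C\eta^{1/2}\bar\rho^{-2}\sup\|\rho\|_{L^3}$; inserting this into \eqref{2.14} and applying Young's inequality $\|\rho\|_{L^3}^2\cdot A\le \tfrac14\|\rho\|_{L^3}^3+CA^3$ closes the estimate for $\sup\|\rho\|_{L^3}^3$. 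In short, your proof becomes correct if you swap the order: do \eqref{2.47} before \eqref{2.45}--\eqref{2.46}, and in the latter use Young's inequality on the $\|\rho\|_{L^3}^2$ factors rather than trying to absorb them directly.
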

\begin{proof}[Proof]
1. By virtue of \eqref{2.2}, \eqref{2.44}, and $\bar{\rho}=\|\rho_0\|_{L^\infty}+1$, we deduce that
\begin{align}\label{2.49}
\bar{\rho}\int_0^T\|\nabla u\|_{L^2}^6&\le C\bar{\rho}\sup_{0\le t\le T}\|\nabla u\|_{L^2}^4\int_0^T\|\nabla u\|_{L^2}^2dt\nonumber\\
&\le C\bar{\rho}\sup_{0\le t\le T}\|\nabla u\|_{L^2}^4\Big(\|\sqrt{\rho_0}u_0\|_{L^2}^2+\|b_0\|_{L^2}^2
+\sup_{0\le t\le T}\|\rho\|_{L^3}^2\int_0^T\|\nabla\theta\|_{L^2}^2dt\Big)\nonumber\\
&\le C\bar{\rho}\sup_{0\le t\le T}\|\nabla u\|_{L^2}^4\Big(\sup_{0\le t\le T}(\|\sqrt{\rho}u\|_{L^2}^2+\|b\|_{L^2}^2)
+\sup_{0\le t\le T}\|\rho\|_{L^3}^2\int_0^T\|\nabla\theta\|_{L^2}^2dt\Big)\nonumber\\
&\le C\eta^\frac12\sup_{0\le t\le T}\|\nabla u\|_{L^2}^2
+C\eta\int_0^T\|\nabla\theta\|_{L^2}^2dt.
\end{align}
It follows from \eqref{2.4} and \eqref{2.44} that
\begin{align}\label{2.50}
&\bar{\rho}
\sup_{0\le t\le T}(\|\nabla b\|_{L^2}^2+\|b\|_{L^4}^4)+\bar{\rho}\int_0^T(\|b_t\|_{L^2}^2+\|\nabla^2b\|_{L^2}^2
+\||b||\nabla b|\|_{L^2}^2)dt\nonumber\\
&\le C\bar{\rho}(\|\nabla b_0\|_{L^2}^2+\|b_0\|_{L^4}^4)+C\eta^\frac12\int_0^T\|\nabla u\|_{L^2}^6dt
+C\eta^\frac34\int_0^T\|\nabla^2b\|_{L^2}^2dt,
\end{align}
which, after choosing $\eta_0$ suitably small, together with \eqref{2.49} yields that
\begin{align}\label{2.51}
&\bar{\rho}
\sup_{0\le t\le T}(\|\nabla b\|_{L^2}^2+\|b\|_{L^4}^4)+\bar{\rho}\int_0^T(\|b_t\|_{L^2}^2+\|\nabla^2b\|_{L^2}^2
+\||b||\nabla b|\|_{L^2}^2)dt\nonumber\\
&\le C\bar{\rho}(\|\nabla b_0\|_{L^2}^2+\|b_0\|_{L^4}^4)+C\eta\sup_{0\le t\le T}\|\nabla u\|_{L^2}^2
+C\eta^\frac32\int_0^T\|\nabla\theta\|_{L^2}^2dt.
\end{align}

2. We infer from Gagliardo-Nirenberg and H\"older's inequalities,  \eqref{2.8}, \eqref{2.44}, and \eqref{2.50} that
\begin{align*}
&\sup_{0\le t\le T}\bar{\rho}\|\sqrt{\rho}E\|_{L^2}^2
+\bar{\rho}\int_0^T\big(\||u||\nabla u|\|_{L^2}^2+\|\nabla\theta\|_{L^2}^2\big)dt\nonumber\\
&\le C\bar{\rho}\|\sqrt{\rho_0}E_0\|_{L^2}^2
+C\eta^\frac14\bar{\rho}\int_0^T\|\nabla^2 b\|_{L^2}^2dt\nonumber\\
&\quad+C\eta\bar{\rho}\int_0^T\|\nabla u\|_{L^2}^6dt+C\bar{\rho}\sup_{0\le t\le T}\|\nabla b\|_{L^2}^4\int_0^T\|\nabla u\|_{L^2}^2dt\nonumber\\
&\quad+C\bar{\rho}\int_0^T\|\sqrt{\rho}\theta\|_{L^2}\|\nabla\theta\|_{L^2}\||u||\nabla u|\|_{L^2}\|\rho\|_{L^3}^\frac{1}{2}dt\nonumber\\
&\le C\bar{\rho}(\|\sqrt{\rho_0}E_0\|_{L^2}^2+\|\nabla b_0\|_{L^2}^2
+(\|b_0\|_{L^2}\|\nabla b_0\|_{L^2})\|\nabla b_0\|_{L^2}^2)+C\eta^\frac14\bar{\rho}\int_0^T\|\nabla^2b\|_{L^2}^2dt\nonumber\\
&\quad
+C\bar{\rho}\sup_{0\le t\le T}\|\nabla b\|_{L^2}^4(\sup_{0\le t\le T}(\|\sqrt{\rho}u\|_{L^2}^2+\|b\|_{L^2}^2)+\sup_{0\le t\le T}\|\rho\|_{L^3}^2\int_0^T\|\nabla\theta\|_{L^2}^2dt)\nonumber\\
&\quad+C\eta^\frac32\sup_{0\le t\le T}\|\nabla u\|_{L^2}^2
+C\eta\bar{\rho}\int_0^T\|\nabla\theta\|_{L^2}^2dt
+C\eta^\frac14\bar{\rho}\int_0^T\|(|\nabla\theta|,|u||\nabla u|)\|_{L^2}^2dt\nonumber\\
&\le C\bar{\rho}(\|\sqrt{\rho_0}E_0\|_{L^2}^2+\|\nabla b_0\|_{L^2}^2)
+C\bar{\rho}\eta^\frac12\sup_{0\le t\le T}\|\nabla b\|_{L^2}^2
+C\eta^\frac32\sup_{0\le t\le T}\|\nabla u\|_{L^2}^2\nonumber\\
&\quad+C\eta^\frac14\bar{\rho}\int_0^T\|\nabla^2b\|_{L^2}^2dt
+C\eta^\frac14\bar{\rho}\int_0^T\|(|\nabla\theta|,|u||\nabla u|)\|_{L^2}^2dt.
\end{align*}
This, combined with the fact $\eta_0$ is suitably small, implies that
\begin{align}\label{2.52}
&\sup_{0\le t\le T}\bar{\rho}\|\sqrt{\rho}E\|_{L^2}^2
+\bar{\rho}\int_0^T(\||u||\nabla u|\|_{L^2}^2+\|\nabla\theta\|_{L^2}^2)dt\nonumber\\
&\le C\bar{\rho}(\|\sqrt{\rho_0}E_0\|_{L^2}^2+\|\nabla b_0\|_{L^2}^2)
+C\bar{\rho}\eta^\frac12\sup_{0\le t\le T}\|\nabla b\|_{L^2}^2
+C\eta^\frac32\sup_{0\le t\le T}\|\nabla u\|_{L^2}^2 \notag \\
& \quad +C\eta^\frac14\bar{\rho}\int_0^T\|\nabla^2b\|_{L^2}^2dt.
\end{align}

3. Using \eqref{2.24}, \eqref{2.51}, and \eqref{2.52}, and choosing $\varepsilon_1\le \eta$, we deduce from \eqref{2.44} that
\begin{align}\label{2.53}
&\sup_{0\le t\le T}\|\nabla u\|_{L^2}^2+\int_0^T\Big\|\Big(\sqrt{\rho}u_t, \frac{\nabla F}{\sqrt{\bar{\rho}}},
\frac{\nabla w}{\sqrt{\bar{\rho}}}\Big)\Big\|_{L^2}^2dt\nonumber\\
&\le C\|\nabla u_0\|_{L^2}^2+C\bar{\rho}(\sup_{0\le t\le T}\|\sqrt{\rho_0}E_0\|_{L^2}^2+\|\nabla b_0\|_{L^2}^2)
+\eta\bar{\rho}\int_0^T\||b||\nabla b|\|_{L^2}^2dt
\nonumber\\
&\quad+C\bar{\rho}^3\int_0^T\|\nabla u\|_{L^2}^4(\|\nabla u\|_{L^2}^2+\bar{\rho}\|\sqrt{\rho}E\|_{L^2}^2+\|b\|_{L^4}^4)dt
+C\bar{\rho}\int_0^T\|\nabla b\|_{L^2}^4\|\nabla u\|_{L^2}^2dt\nonumber\\
&\quad+C\sup_{0\le t\le T}(\bar{\rho}^2\|\rho\|_{L^3}^\frac12\|\sqrt{\rho}\theta\|_{L^2})\int_0^T
(\|\nabla\theta\|_{L^2}^2+\||u||\nabla u|\|_{L^2}^2)dt
+\eta\bar{\rho}\int_0^T\|\nabla\theta\|_{L^2}^2dt\nonumber\\
&\quad+C\bar{\rho}\int_0^T\|b\|_{L^2}\|\nabla b\|_{L^2}\|b_t\|_{L^2}^2dt+C\bar{\rho}\eta^\frac12\sup_{0\le t\le T}\|\nabla b\|_{L^2}^2
+C\eta^\frac32\sup_{0\le t\le T}\|\nabla u\|_{L^2}^2\nonumber\\
&\quad+C\eta^\frac14\bar{\rho}\int_0^T\|\nabla^2b\|_{L^2}^2dt+C\bar{\rho}\eta^\frac12\int_0^T\|\nabla u\|_{L^2}^6dt.
\end{align}
In view of \eqref{2.44} and \eqref{2.2}, we obtain after choosing $\eta_0<1$ that
\begin{align}\label{2.54}
&\bar{\rho}^3\int_0^T\|\nabla u\|_{L^2}^4(\|\nabla u\|_{L^2}^2+\bar{\rho}\|\sqrt{\rho}E\|_{L^2}^2+\|b\|_{L^4}^4)dt\nonumber\\
&\le C\bar{\rho}^3\sup_{0\le t\le T}(\|\nabla u\|_{L^2}^2+\bar{\rho}\|\sqrt{\rho}E\|_{L^2}^2+\eta^\frac14\|\nabla b\|_{L^2}^2)
\sup_{0\le t\le T}\|\nabla u\|_{L^2}^2\int_0^T\|\nabla u\|_{L^2}^2dt\nonumber\\
&\le C\bar{\rho}^3(\|\nabla u\|_{L^2}^2+\bar{\rho}(\|\sqrt{\rho}E\|_{L^2}^2+\|\nabla b\|_{L^2}^2))\sup_{0\le t\le T}\|\nabla u\|_{L^2}^2\nonumber\\
&\quad\times\Big(\sup_{0\le t\le T}(\|\sqrt{\rho}u\|_{L^2}^2+\|b\|_{L^2}^2)
+C\sup_{0\le t\le T}\|\rho\|_{L^3}^2\int_0^T\|\nabla\theta\|_{L^2}^2dt\Big)\nonumber\\
&\le C\eta^\frac12\sup_{0\le t\le T}\|\nabla u\|_{L^2}^2
+C\eta\int_0^T\|\nabla\theta\|_{L^2}^2dt,
\end{align}
and
\begin{align}\label{2.55}
\bar{\rho}\int_0^T\|\nabla b\|_{L^2}^4\|\nabla u\|_{L^2}^2dt
&\le C\bar{\rho}\sup_{0\le t\le T}\|\nabla b\|_{L^2}^4\Big(\sup_{0\le t\le T}(\|\sqrt{\rho}u\|_{L^2}^2
+\|b\|_{L^2}^2)+\|\rho\|_{L^3}^2\int_0^T\|\nabla\theta\|_{L^2}^2dt\Big)\nonumber\\
&\le C\eta^\frac14\sup_{0\le t\le T}\|\nabla b\|_{L^2}^2+C\eta\int_0^T\|\nabla\theta\|_{L^2}^2dt.
\end{align}
Putting \eqref{2.54} and \eqref{2.55} into \eqref{2.53}, we derive that
\begin{align*}
&\sup_{0\le t\le T}\|\nabla u\|_{L^2}^2+\int_0^T\Big\|\Big(\sqrt{\rho}u_t, \frac{\nabla F}{\sqrt{\bar{\rho}}},
\frac{\nabla w}{\sqrt{\bar{\rho}}}\Big)\Big\|_{L^2}^2dt\nonumber\\
&\le C\|\nabla u_0\|_{L^2}^2+C\bar{\rho}\Big(\sup_{0\le t\le T}\|\sqrt{\rho_0}E_0\|_{L^2}^2+\|\nabla b_0\|_{L^2}^2\Big)
+\eta\bar{\rho}\int_0^T\||b||\nabla b|\|_{L^2}^2dt
\nonumber\\
&\quad+C\eta^\frac12\sup_{0\le t\le T}\|\nabla u\|_{L^2}^2
+C\eta\bar{\rho}\int_0^T\|\nabla\theta\|_{L^2}^2dt
+C\eta^\frac14\bar{\rho}\int_0^T\|b_t\|_{L^2}^2dt\nonumber\\
&\quad+C\bar{\rho}\eta^\frac14\sup_{0\le t\le T}\|\nabla b\|_{L^2}^2
+C\eta^\frac32\sup_{0\le t\le T}\|\nabla u\|_{L^2}^2+C\eta^\frac14\bar{\rho}\int_0^T\|\nabla^2b\|_{L^2}^2dt\nonumber\\
&\quad+C\eta^\frac14\bar{\rho}\int_0^T
\big(\|\nabla\theta\|_{L^2}^2+\||u||\nabla u|\|_{L^2}^2\big)dt,
\end{align*}
which, after choosing $\eta_0$ suitably small, together with \eqref{2.51} and \eqref{2.52} yields that
\begin{align}\label{2.57}
&\sup_{0\le t\le T}\big[\bar{\rho}\big(\|\nabla b\|_{L^2}^2+\|b\|_{L^4}^4+\|\sqrt{\rho}E\|_{L^2}^2\big)+\|\nabla u\|_{L^2}^2\big]
+\int_0^T\Big\|\Big(\sqrt{\rho}u_t, \frac{\nabla F}{\sqrt{\bar{\rho}}},
\frac{\nabla w}{\sqrt{\bar{\rho}}}\Big)\Big\|_{L^2}^2dt\nonumber\\
&\quad+C\bar{\rho}\int_0^T\big(\|b_t\|_{L^2}^2+\|\nabla^2b\|_{L^2}^2
+\||b||\nabla b|\|_{L^2}^2+\||u||\nabla u|\|_{L^2}^2+\|\nabla\theta\|_{L^2}^2\big)dt\nonumber\\
&\le C\|\nabla u_0\|_{L^2}^2+C\bar{\rho}\big(\|\sqrt{\rho_0}E_0\|_{L^2}^2+\|\nabla b_0\|_{L^2}^2\big).
\end{align}
This implies \eqref{2.47}.

4. We deduce from \eqref{2.2} and \eqref{2.57} that
\begin{align}\label{2.58}
&\sup_{0\le t\le T}\big(\|\sqrt{\rho}u\|_{L^2}^2+\|b\|_{L^2}^2\big)+\int_0^T\big(\|\nabla u\|_{L^2}^2+\|\nabla b\|_{L^2}^2\big)dt\nonumber\\
&\le \|\sqrt{\rho_0}u_0\|_{L^2}^2+\|b_0\|_{L^2}^2
+C\sup_{0\le t\le T}\|\rho\|_{L^3}^2\int_0^T\|\nabla\theta\|_{L^2}^2dt\nonumber\\
&\le \|\sqrt{\rho_0}u_0\|_{L^2}^2+\|b_0\|_{L^2}^2+C\frac{1}{\bar{\rho}}
\sup_{0\le t\le T}\|\rho\|_{L^3}^2\big[\bar{\rho}(\|\sqrt{\rho_0}E_0\|_{L^2}^2+\|\nabla b_0\|_{L^2}^2)+\|\nabla u_0\|_{L^2}^2\big]\nonumber\\
&\le \|\sqrt{\rho_0}u_0\|_{L^2}^2+\|b_0\|_{L^2}^2
+C\eta^\frac12\frac{1}{\bar{\rho}^2}\sup_{0\le t\le T}\|\rho\|_{L^3}.
\end{align}
Indeed, it follows from \eqref{2.14}, \eqref{2.44}, \eqref{2.58}, and Young's inequality that
\begin{align}\label{2.59}
&\sup_{0\le t\le T}\|\rho\|_{L^3}^3+\int_0^T\int\rho^3pdxdt\nonumber\\
&\le C\|\rho_0\|_{L^3}^3+C\sup_{0\le t\le T}
\Big(\|\rho\|_{L^\infty}^\frac23\|\sqrt{\rho}u\|_{L^2}^\frac13
\|\sqrt{\rho}E\|_{L^2}^\frac13\Big)\|\rho\|_{L^3}^3
+C\bar{\rho}^2\sup_{0\le t\le T}\|\rho\|_{L^3}^2\int_0^T\big(\|\nabla u\|_{L^2}^2+\|\nabla b\|_{L^2}^2\big)dt\nonumber\\
&\le C\|\rho_0\|_{L^3}^3+C\big(\eta^\frac{1}{12}+\eta^\frac12\big)\sup_{0\le t\le T}\|\rho\|_{L^3}^3
+C\bar{\rho}^2\sup_{0\le t\le T}\|\rho\|_{L^3}^2\big(\|\sqrt{\rho_0}u_0\|_{L^2}^2+\|b_0\|_{L^2}^2\big)\nonumber\\
&\le C\|\rho_0\|_{L^3}^3+C\Big(\eta^\frac{1}{2}+\frac14\Big)\sup_{0\le t\le T}\|\rho\|_{L^3}^3
+C\bar{\rho}^6\big(\|\sqrt{\rho_0}u_0\|_{L^2}^2+\|b_0\|_{L^2}^2\big)^3,
\end{align}
which implies \eqref{2.45} by choosing $\eta_0$ suitably small.

Combining \eqref{2.58} and \eqref{2.59}, we get that
\begin{align*}
&\sup_{0\le t\le T}\bar{\rho}^2(\|\sqrt{\rho}u\|_{L^2}^2+\|b\|_{L^2}^2)
+\bar{\rho}^2\int_0^T(\|\nabla u\|_{L^2}^2+\|\nabla b\|_{L^2}^2)dt\nonumber\\
&\le C\bar{\rho}^2(\|\sqrt{\rho_0}u_0\|_{L^2}^2+\|b_0\|_{L^2}^2)+C\eta^\frac12\sup_{0\le t\le T}\|\rho\|_{L^3}\nonumber\\
&\le C(\|\rho_0\|_{L^3}+\bar{\rho}^2(\|\sqrt{\rho_0}u_0\|_{L^2}^2+\|b_0\|_{L^2}^2)),
\end{align*}
which gives \eqref{2.46}.

Finally, \eqref{2.48} follows from Lemma \ref{l27}, \eqref{2.46}, and \eqref{2.47}.
\end{proof}

\begin{proposition}\label{pro}
Assume that $3\mu>\lambda$. Let $\eta_0$, $N_T$, and $N_0$ be as in Lemma \ref{l28}. Then, the followings hold true.

(i) There exists a number $\varepsilon_0\in (0, \eta_0)$ depending only on $R$, $\mu$, and $\lambda$ such that if
\begin{align}\label{3.54}
\sup_{0\le t\le T}\|\rho\|_{L^\infty}\le 4\bar{\rho}, \ N_T\le \sqrt{\varepsilon_0}, \ N_0\le \varepsilon_0,
\end{align}
then
\begin{align*}
\sup_{0\le t\le T}\|\rho\|_{L^\infty}\le 2\bar{\rho}, \ N_T\le\frac{ \sqrt{\varepsilon_0}}{2}.
\end{align*}

(ii) As a consequence of (i), the following estimates hold
\begin{align}\label{3.56}
\sup_{0\le t\le T}\|\rho\|_{L^\infty}\le 2\bar{\rho}, \ N_T\le\frac{ \sqrt{\varepsilon_0}}{2},
\end{align}
provided that $N_0\le \varepsilon_0$ is sufficiently small.
\end{proposition}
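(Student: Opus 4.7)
The plan is to derive (i) by a direct application of Lemma \ref{l28} under the a priori hypothesis \eqref{3.54} and then feed the improved bounds \eqref{2.45}--\eqref{2.48} back into the definitions of $\sup\|\rho\|_{L^\infty}$ and $N_T$ to strengthen the constants from $4\bar\rho$ to $2\bar\rho$ and from $\sqrt{\varepsilon_0}$ to $\sqrt{\varepsilon_0}/2$. Part (ii) will then follow by a straightforward continuity/bootstrap argument.

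For part (i), the hypothesis \eqref{3.54} with $\varepsilon_0\le\eta_0$ is exactly what Lemma \ref{l28} requires (since $N_0\le\varepsilon_0$ forces $N_T\le\sqrt{\varepsilon_0}\le\sqrt{\eta_0}$), so estimates \eqref{2.45}--\eqref{2.48} are in force. From \eqref{2.48} we get
\begin{align*}
\sup_{0\le t\le T}\|\rho\|_{L^\infty}\le \bar\rho\, e^{CN_0^{1/6}+CN_0^{1/2}},
\end{align*}
and choosing $\varepsilon_0$ small enough that $e^{C\varepsilon_0^{1/6}+C\varepsilon_0^{1/2}}\le 2$ yields $\sup\|\rho\|_{L^\infty}\le 2\bar\rho$. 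To control $N_T$ I would read it factor by factor: \eqref{2.45} bounds $\|\rho\|_{L^3}$ and \eqref{2.46} bounds $\bar\rho^2(\|\sqrt\rho u\|_{L^2}^2+\|b\|_{L^2}^2)$ by the same quantity $C(\|\rho_0\|_{L^3}+\bar\rho^2(\|\sqrt{\rho_0}u_0\|_{L^2}^2+\|b_0\|_{L^2}^2))$, while \eqref{2.47} bounds $\|\nabla u\|_{L^2}^2+\bar\rho(\|\sqrt\rho E\|_{L^2}^2+\|\nabla b\|_{L^2}^2)$ by $C(\|\nabla u_0\|_{L^2}^2+\bar\rho(\|\sqrt{\rho_0}E_0\|_{L^2}^2+\|\nabla b_0\|_{L^2}^2))$. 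Multiplying the two factor bounds together with $\bar\rho$ produces $N_T\le C_*N_0\le C_*\varepsilon_0$ for an absolute constant $C_*=C_*(R,\mu,\lambda,\nu,\kappa,c_v)$. Finally, requiring $\varepsilon_0\le \min\{\eta_0,\,1/(4C_*^2)\}$ and small enough for the exponential bound above forces $N_T\le\sqrt{\varepsilon_0}/2$, finishing (i).

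For part (ii), I would run a continuity argument. Set
\begin{align*}
T^*:=\sup\Bigl\{\tau\in[0,T]:\ \sup_{0\le t\le\tau}\|\rho(t)\|_{L^\infty}\le 4\bar\rho,\ N_\tau\le\sqrt{\varepsilon_0}\Bigr\}.
\end{align*}
At $t=0$ the quantities $\|\rho_0\|_{L^\infty}<\bar\rho$ and $N_0\le\varepsilon_0<\sqrt{\varepsilon_0}$ (for $\varepsilon_0<1$) hold strictly, and by the regularity of the strong solution the maps $t\mapsto\|\rho(t)\|_{L^\infty}$ and $t\mapsto N_t$ are continuous, so $T^*>0$. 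On $[0,T^*]$, part (i) upgrades the bounds to $\sup\|\rho\|_{L^\infty}\le 2\bar\rho$ and $N_{T^*}\le\sqrt{\varepsilon_0}/2$. If $T^*<T$, these strict inequalities together with continuity would allow extending the defining set past $T^*$, contradicting maximality; hence $T^*=T$ and \eqref{3.56} holds.

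The main obstacle is the factor-by-factor bookkeeping in part (i): one needs to verify that the product structure of $N_T$ decouples cleanly into the three lemma estimates \eqref{2.45}--\eqref{2.47} so that one obtains the \emph{linear} bound $N_T\le C_*N_0$, rather than anything weaker. Once this is in hand, the exponential bound \eqref{2.48} handles the $L^\infty$ density bound, and both closures are just a matter of taking $\varepsilon_0$ small relative to a single absolute constant depending only on the physical parameters.
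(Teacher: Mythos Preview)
Your proposal is correct and follows essentially the same approach as the paper: invoke Lemma~\ref{l28} under \eqref{3.54} with $\eta=\varepsilon_0\le\eta_0$, use \eqref{2.45}--\eqref{2.47} to obtain $N_T\le C_* N_0\le C_*\varepsilon_0\le\sqrt{\varepsilon_0}/2$ and \eqref{2.48} to obtain $\sup\|\rho\|_{L^\infty}\le 2\bar\rho$, then close (ii) by a continuity argument on the maximal time where the a priori bounds hold. One small wording slip: in (i) the bound $N_T\le\sqrt{\varepsilon_0}$ is part of the hypothesis \eqref{3.54} itself, not something ``forced'' by $N_0\le\varepsilon_0$; but this does not affect the argument.
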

\begin{proof}[Proof]
(i) By \eqref{3.54}, all the conditions in Lemma \ref{l28} hold true when $\varepsilon_0\le \eta_0$ is small enough. Thus, we have
\begin{align*}
N_T&\le \bar{\rho}(\|\rho_0\|_{L^3}+\bar{\rho}^2(\|\sqrt{\rho_0}u_0\|_{L^2}^2+\|b_0\|_{L^2}^2))
(\|\nabla u_0\|_{L^2}^2+\bar{\rho}(\|\sqrt{\rho_0}E_0\|_{L^2}^2+\|\nabla b_0\|_{L^2}^2))\le C\varepsilon_0\le
\frac{\sqrt{\varepsilon_0}}{2}.
\end{align*}
and
\begin{align*}
\sup_{0\le t\le T}\|\rho\|_{L^\infty}\le C\bar{\rho}e^{CN_0^\frac16+CN_0^\frac12}\le
\bar{\rho}e^{C\varepsilon_0^\frac16+C\varepsilon_0^\frac12}\le 2\bar{\rho},
\end{align*}
provided that $\varepsilon_0$ is sufficiently small.

(ii) Define
\begin{align*}
T^{\#}\triangleq\max\Big\{T'\in (0, T]\Big| \sup_{0\le t\le T'}\|\rho\|_{L^\infty}\le 2\bar{\rho},~~
N_{T'}\le \sqrt{\varepsilon_0}\Big\}.
\end{align*}
Then, by (i), we have
\begin{align}\label{fhf}
\sup_{0\le t\le T}\|\rho\|_{L^\infty}\le 2\bar{\rho}, \quad N_{T'}\le \frac{\sqrt{\varepsilon_0}}{2},\quad \forall T'\in(0, T^{\#}).
\end{align}

If $T^{\#}<T$, noticing that $N_{T'}$, and $\sup\limits_{0\le t\le T'}\|\rho\|_{L^\infty}$
are continuous on $[0, T]$, there is another time $T^{\#\#}\in (T^{\#}, T]$ such that
\begin{align*}
\sup_{0\le t\le T'}\|\rho\|_{L^\infty}\le 2\bar{\rho},~~
N_{T'}\le \sqrt{\varepsilon_0},
\end{align*}
which contradicts to the definition of $T^{\#}$. Thus, we have $T^{\#}=T$, and \eqref{3.56}
follows from \eqref{fhf} and the continuity of
$N_{T'}$ and $\sup\limits_{0\le t\le T'}\|\rho\|_{L^\infty}$ on $[0, T]$.

\subsection{Proof of Theorem \ref{thm1}}

With all the {\it a priori} estimates established in Section 2, we can immediately obtain the existence result of Theorem \ref{thm1} by standard arguments as those in \cite{LZ20}. Here we omit the details for simplicity.
\end{proof}

\section*{Acknowledgments}
The authors would like to express their gratitude to the reviewers for careful reading and helpful suggestions which led to an improvement of the original manuscript.

\end{document}